\documentclass{amsart}

\usepackage{multicol}
\usepackage{colortbl}
\usepackage[dvips]{graphics}
\usepackage{exscale}
\usepackage{epsfig}
\usepackage{pst-plot,pst-infixplot,pstricks,graphicx}
\usepackage{amssymb,latexsym,amsthm,amsfonts,color,fancyhdr,mathrsfs}
\usepackage{lscape}
\usepackage{rotating}
\usepackage{caption}

\newtheorem{theorem}{Theorem}[section]
\newtheorem{proposition}[theorem]{Proposition}
\newtheorem{lemma}[theorem]{Lemma}
\newtheorem{corollary}[theorem]{Corollary}

\theoremstyle{definition}

\theoremstyle{remark}
\newtheorem{remark}[theorem]{Remark}

\numberwithin{equation}{section}

\begin{document}

\title[$H_q-$Semiclassical OP via polynomial mappings]
{$H_q-$Semiclassical orthogonal polynomials via polynomial
mappings}

%\author{}

\author{K. Castillo}
\address{(KC) CMUC, Department of Mathematics, University of Coimbra,
3001-501 Coimbra, Portugal}
\email{kenier@mat.uc.pt}
\author{M. N. de Jesus}
\address{(MNJ) Departamento de Matem\'atica, Escola Superior de Tecnologia e Gest\~ao,
Campus Polit\'ecnico de Repeses, 3504-510 Viseu, PORTUGAL}
\email{mnasce@estv.ipv.pt}
\author{F. Marcell\'an}
\address{(FM) Instituto de Ciencias Matem\'aticas (ICMAT), Calle Nicol\'as Cabrera 13-15, Campus de Cantoblanco UAM, 28049 Madrid, Spain,  and Departamento de Matem\'aticas, Universidad Carlos III de Madrid,
Avenida de la Universidad 30, 28911, Legan\'es, Spain}
\email{pacomarc@ing.uc3m.es}
\author{J. Petronilho}
\address{(JP) CMUC, Department of Mathematics, University of Coimbra,
3001-454 Coimbra, Portugal}
\email{josep@mat.uc.pt}

\subjclass[2000]{Primary 42C05; Secondary 33C45}

\date{\today}

\keywords{Orthogonal polynomials, $H_q-$semiclassical orthogonal
polynomials, polynomial mappings, $q-$difference equations}

\begin{abstract}
In this work we study orthogonal polynomials via polynomial mappings in the framework of the $H_q-$semiclassical class.
We consider two monic orthogonal polynomial sequences $\{p_n (x)\}_{n\geq0}$ and $\{q_n(x)\}_{n\geq0}$ such that
$$
p_{kn}(x)=q_n(x^k)\;,\quad n=0,1,2,\ldots\;,
$$
being $k$ a fixed integer number such that $k\geq2$, and we prove that if one of the
sequences $\{p_n (x)\}_{n\geq0}$ or $\{q_n(x)\}_{n\geq0}$ is
$H_q-$semiclassical, then so is the other one. In particular, we show that if
$\{p_n(x)\}_{n\geq0}$ is $H_q-$semiclassical of class $s\leq k-1$, then $\{q_n (x)\}_{n\geq0}$ is $H_{q^k}-$classical.
This fact allows us to recover and extend recent results in the framework of cubic transformations,
whenever we consider the above equality with $k=3$.
The idea of blocks of recurrence relations introduced by Charris and Ismail plays a key role in our study.
\end{abstract}
\maketitle

\section{Introduction}

Polynomial mappings constitute an interesting topic in the theory of orthogonal polynomials since their relations with Julia sets and almost periodic Jacobi matrices. Given a sequence of orthogonal polynomials with respect to a probability measure $\mu$ supported on a set $I\subseteq [-1,1]$, polynomial mappings provide a general approach to the analysis of polynomials orthogonal with respect to a measure defined by a polynomial transformation such that the inverse of $I$ is a real set that, in general, will be the union of a finite number of intervals such that any two of these intervals have at most one common point. These polynomials appear in the study of sieved orthogonal polynomials by using blocks of recurrence relations, see \cite{CharrisIsmail} and \cite{CharrisIsmailMonsalve}.  Applications of polynomial mappings in quantum chemistry and solid state physics can be find in \cite{Wheeler} and \cite{Pettifor}, respectively. A general framework is provided in \cite{JerVan} and the updated related works \cite{Peherstorfer} and \cite{MarcioPetronilhoJAT}.

The case of quadratic mappings has been studied by many authors. In particular, in \cite{ChiharaBUMI} the following problem is solved: Given a sequence of orthogonal polynomials $\{p_{n}(x)\}_{n\geq0}$ to find a symmetric sequence of orthogonal polynomials $\{q_{n}(x)\}_{n\geq0}$  such that  $q_{2n}(x)= p_{n}(x^{2}).$ In this case, $q_{2n+1}(x)= K_{n}(x^{2}),$ where $K_{n}(x)$ is the so called kernel polynomial of degree $n$ (see \cite{Chihara}). Later one, in \cite{LChihara} a quite general problem concerning the orthogonality of sequences of polynomials  $\{R_{n}(x)\}_{n\geq0}$ defined by $R_{2n}(x)= p_{n}(x^{2})+ \theta_{2n} x K_{n-1}(x^{2}), R_{2n+1}(x)= x K_{n}(x^{2})+ \theta_{2n+1} p_{n}(x^{2}), n\geq0,$ is analyzed. Indeed, necessary and sufficient conditions for such an orthogonality are deduced. A more general situation is described in \cite{Maroni1}, where the study of general quadratic decompositions of sequences of monic orthogonal polynomials $\{B_{n}(x)\}_{n\geq0}$ such that  $B_{2n}(x)= p_{n}(x^{2})+ x a_{n-1}(x^{2}), B_{2n+1}(x)= x R_{n}(x^{2})+  b_{n}(x^{2}), n\geq0,$ with $p_{n}(x), R_{n}(x)$ polynomials of degree $n$, and $a_{n}(x), b_{n}(x)$ polynomials of degree at most $n,$ is analyzed. Necessary and sufficient conditions for the orthogonality of the sequences of polynomials $\{p_{n}(x)\}_{n\geq0}$ and $\{R_{n}(x)\}_{n\geq0}$ are given. This idea of quadratic decomposition in a more general framework is the topic presented in \cite{Maroni2}. Finally, in \cite{PacoZeLAA,PacoZePortMath}, given a quadratic polynomial $\pi_{2}(x)$ the orthogonality of sequences of monic polynomials $\{B_{n}(x)\}_{n\geq0}$ such that either $B_{2n}(x)= p_{n}(\pi_{2}(x))$ or $B_{2n+1}(x)= (x-c) p_{n}(\pi_{2}(x))$ is studied and the relation between the corresponding linear functionals is obtained.

The study of the cubic case comes back to the pioneer work \cite{BarrucandDickinson} where assuming that  $\{p_{n}(x)\}_{n\geq0}$ is a symmetric sequence of monic orthogonal polynomials, then necessary and sufficient conditions for the orthogonality of a symmetric sequence of orthogonal polynomials $\{B_{n}(x)\}_{n\geq0}$ such that $B_{3n}(x)= p_{n}(x^{3}+ bx)$ are given. Such constrains (symmetry and the particular choice of the cubic polynomials) have been removed in \cite{PacoZeCubic1,PacoZeCubic2}, where the authors consider the problem of orthogonality of sequences $\{B_{n}(x)\}_{n\geq0}$ such that $B_{3n+m}(x)= \theta_{m}(x) p_{n}(\pi_{3}(x)), m\in\{0,1,2\}$, where $\pi_{3}(x)$ is a fixed cubic polynomial and $\theta_{m}(x)$ is a fixed polynomial of degree $m$. The problem of general cubic decompositions has been studied in \cite{Mesquita} following the hints of the quadratic case.

Nevertheless, questions related to cubic decompositions of orthogonal polynomial sequences satisfying some extra conditions as their semiclassical character have not been considered in the literature up to the recent contributions \cite{TounsiBouguerra} and \cite{TounsiRaddaoui} for particular cases of semiclassical and $H_{q}-$semiclassical orthogonal polynomials of class one. A more general framework concerning semiclassical orthogonal polynomials, including some particular polynomial mappings, is presented in \cite{KMP}.

The aim of the present  contribution is to analyze  sequences of monic orthogonal polynomials $\{p_{n}(x)\}_{n\geq0}$  and $\{q_{n}(x)\}_{n\geq0}$  such that $p_{nk}(x)= q_{n}(x^{k}), k\geq 2,$ and to study how the $H_{q}-$semiclassical character of the sequences is preserved. The novelty of  our results is related to the analysis of $H_{q}$-semiclassical orthogonal polynomials generated by such polynomial mappings by using the connection between the corresponding Stieltjes functions as a method to generate new examples of sequences of $H_{q}-$semiclassical orthogonal polynomials of class greater than or equal to 1, taking into account that the classification of such sequences, even for class 1, remains an open problem.

The structure of the manuscript is the following.
In Section 2 we present the basic background concerning $H_{q}-$semiclassical linear functionals as well as some properties about sequences of orthogonal polynomials defined by polynomial mappings. In Section 3 we deal with the stability, i.e. the preservation of the semiclassical character,  of  $H_{q}$-semiclassical linear functionals when polynomial mappings are introduced. The key idea is the consideration of the formal Stieltjes series associated with both linear functionals. In particular, the case of the polynomial mapping $\pi_{k}(x)=x^{k}, k\geq2$ is studied and the class of the associated linear functional is discussed  (Theorem 3.5). Finally, in Section 4, some illustrative computational examples of $H_{q}-$semiclassical sequences of orthogonal polynomials of classes 1 and 2 involving Little $q$-Laguerre and Little $q$-Jacobi polynomials are deeply studied, including discrete measure representations for some of the considered examples.

\section{Background}\label{Prel}

In this section we recall some basic facts concerning the general theory of orthogonal polynomials (OP)
that will be needed in the sequel.

\subsection{Basic definitions}

We denote by $\mathcal{P}$ the vector space of polynomials
with coefficients in $\mathbb{C}$ and by $\mathcal{P}^*$ its
dual space. The action of a functional
$\textbf{u}\in\mathcal{P}^*$ over a polynomial $f\in\mathcal{P}$ will be represented by
$\langle\textbf{u},f\rangle$. In particular,
$u_n:=\langle\textbf{u}, x^n\rangle$ is the moment of order $n$ of
$\textbf{u}$. In $\mathcal{P}^*$ we define the
$q-$derivative of a functional $\textbf{u}$ by
$$\left\langle H_q\textbf{u},f\right\rangle:=-\left\langle \textbf{u},H_q f\right\rangle\,$$
where $H_q$ is the Hahn's operator defined as
$$\left(H_q f\right)(x):=\frac{f(qx)-f(x)}{(q-1)x}\;,\quad f\in\mathcal{P}\;, x\neq0,$$ being
$q\in\widetilde{\mathbb{C}}:=\big\{ z\in\mathbb{C}\setminus\{0\} \;|\;
z^n\neq 1\;,\;\forall\; n\in\mathbb{N}\big\}$ \cite{Hahn,KM,Lotfi}.
In particular, this yields
$$\big(H_q\textbf{u})_n:=\langle H_q\textbf{u},x^n\rangle=-[n]_qu_{n-1}\;,\quad n\in\mathbb{N}_0\,,$$
where $[n]_q$ denotes the basic $q-$number, defined by
$$[0]_q:=0\;,\quad [n]_q:=1+q+\cdots+q^{n-1}\equiv\frac{q^n-1}{q-1}\;,\quad n\in\mathbb{N}\,.$$

Given $\textbf{u}\in\mathcal{P}^*$ and
$\phi\in\mathcal{P}$, the dilation of $\textbf{u}$
and the left-multiplication of a polynomial $\phi$ by $\textbf{u}$, are the
functionals $h_d\textbf{u},\phi \textbf{u}\in\mathcal{P}^*$ ($d\in\mathbb{C}\setminus\{0\}$)
defined by
$$\langle h_d\textbf{u},f\rangle:=\langle\textbf{u},h_d(f)\rangle=\langle\textbf{u},f(dx)\rangle\; ,\quad
\langle \phi \textbf{u},f\rangle:=\langle\textbf{u},\phi
f\rangle\; ,\quad f\in\mathcal{P}\, \;.
$$

Let $\textbf{u}\in\mathcal{P}^*$. A sequence $\{p_n(x)\}_{n\geq0}$ in $\mathcal{P}$
is said to be an orthogonal polynomial sequence (OPS) with respect to $\textbf{u}$
if the following two conditions hold:
\begin{enumerate}
\item[{\rm (i)}] $\deg p_n=n$ for each $n\in\mathbb{N}_0$;
\item[{\rm (ii)}] $\langle\textbf{u},p_np_m\rangle=k_n\delta_{n,m}$ for every $m,n\in\mathbb{N}_0$,
\end{enumerate}
where $\{k_n\}_{n\geq0}$ is a sequence of nonzero complex numbers and $\delta_{n,m}$ is the Kronecker symbol.
Under these conditions we say that $\textbf{u}$ is regular (or quasi-definite) \cite{Chihara}.

\subsection{$H_q-$Semiclassical OPS}

A linear functional ${\bf u}\in\mathcal{P}^*$ is called
$H_q-$semiclassical if it is regular and there exist two nonzero
polynomials $\Phi$ and $\Psi$ such that
\begin{equation}\label{grauPsi}
\mbox{\rm deg}\,\Psi\geq1
\end{equation}
and ${\bf u}$ satisfies the  functional equation
\begin{equation}\label{EqDistSC1}
H_q(\Phi{\bf u})=\Psi{\bf u}\;.
\end{equation}
If $\{p_n(x)\}_{n\geq0}$ is an OPS with respect to a
$H_q-$semiclassical functional then $\{p_n (x)\}_{n\geq0}$ is called a
$H_q-$semiclassical OPS. We point out the following useful
criterion.

\begin{proposition}\label{PropositionPhiPsi}
Let ${\bf u}\in\mathcal{P}^*$ be a  regular linear functional. Then,
${\bf u}$ is $H_q-$semiclassical if and only if there exist two
polynomials $\Phi$ and $\Psi$, with at least one of them nonzero,
such that $(\ref{EqDistSC1})$ holds. Moreover, under these
conditions, necessarily both $\Phi$ and $\Psi$ are nonzero and
$\Psi$ satisfies $(\ref{grauPsi})$.
\end{proposition}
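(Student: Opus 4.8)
The plan is to dispatch the forward implication trivially and put all the effort into the converse. If $\textbf{u}$ is $H_q-$semiclassical, then by the very definition there already exist \emph{nonzero} $\Phi$ and $\Psi$ satisfying $(\ref{EqDistSC1})$, so \emph{a fortiori} at least one of them is nonzero; nothing more is needed. The substance of the statement is the reverse direction, where a weak hypothesis (regularity of $\textbf{u}$ together with a pair $\Phi,\Psi$, at least one nonzero, satisfying $(\ref{EqDistSC1})$) must be upgraded to the full conclusion that both are nonzero and that $\deg\Psi\geq1$.

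First I would translate $(\ref{EqDistSC1})$ into moments. Applying $(H_q\textbf{u})_n=-[n]_qu_{n-1}$ to the functional $\Phi\textbf{u}$, equation $(\ref{EqDistSC1})$ becomes equivalent to
$$-[n]_q(\Phi\textbf{u})_{n-1}=(\Psi\textbf{u})_n\;,\qquad n\in\mathbb{N}_0\;,$$
or, testing against an arbitrary $f\in\mathcal{P}$, to $\langle\textbf{u},\Phi\,H_qf+\Psi f\rangle=0$. The one genuine tool I would isolate is a standard regularity lemma: if $\textbf{u}$ is regular and $\rho\in\mathcal{P}$ satisfies $\rho\textbf{u}=0$, then $\rho=0$. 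I would prove this by contradiction using the OPS $\{p_n(x)\}_{n\geq0}$ of $\textbf{u}$: writing $\rho=\sum_{j=0}^m a_jp_j$ with $a_m\neq0$ and $m=\deg\rho$, orthogonality gives $\langle\rho\textbf{u},p_m\rangle=\langle\textbf{u},\rho p_m\rangle=a_mk_m\neq0$, contradicting $\rho\textbf{u}=0$.

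With this lemma the two degenerate cases collapse at once. If $\Phi=0$, then $\Psi\textbf{u}=H_q(\Phi\textbf{u})=0$, so the lemma forces $\Psi=0$, contradicting the hypothesis that at least one is nonzero; hence $\Phi\neq0$. If instead $\Psi=0$, the moment identity gives $[n]_q(\Phi\textbf{u})_{n-1}=0$ for all $n\geq1$, and since $q\in\widetilde{\mathbb{C}}$ implies $[n]_q\neq0$, every moment of $\Phi\textbf{u}$ vanishes, i.e. $\Phi\textbf{u}=0$; the lemma then yields $\Phi=0$, again a contradiction. Therefore both $\Phi$ and $\Psi$ are nonzero. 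For the degree bound I would rule out $\Psi$ being a nonzero constant: if $\Psi\equiv c$ with $c\neq0$, reading the moment identity at $n=0$ and using $[0]_q=0$ makes the left-hand side vanish, so $0=(\Psi\textbf{u})_0=c\,u_0$, forcing $u_0=0$; but regularity gives $u_0=\langle\textbf{u},p_0^2\rangle=k_0\neq0$, a contradiction. Hence $\deg\Psi\geq1$, which is precisely $(\ref{grauPsi})$.

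The only real obstacle is the regularity lemma $\rho\textbf{u}=0\iff\rho=0$; once it is in place the remainder is bookkeeping with moments, and it is worth stressing that the two distinguished hypotheses do exactly complementary jobs here: $q\in\widetilde{\mathbb{C}}$ (hence $[n]_q\neq0$ for $n\geq1$) drives the proof that $\Phi$ and $\Psi$ are nonzero, while $[0]_q=0$ together with regularity ($u_0\neq0$) drives the proof that $\deg\Psi\geq1$.
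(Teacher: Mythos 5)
Your proof is correct: the regularity lemma ($\rho\,\textbf{u}=0$ forces $\rho=0$, proved via expansion in the OPS), the moment identity $(H_q\textbf{w})_n=-[n]_q w_{n-1}$ together with $[n]_q\neq0$ for $q\in\widetilde{\mathbb{C}}$ to handle the case $\Psi=0$, and the evaluation at $n=0$ with $u_0\neq0$ (from regularity) to exclude a nonzero constant $\Psi$ settle all three claims without gaps. Note that the paper states Proposition \ref{PropositionPhiPsi} without proof, presenting it as a known criterion from the $H_q$-semiclassical literature (cf. \cite{Lotfi}); your argument is precisely the standard one used there, so it matches the intended justification rather than offering a genuinely different route.
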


If ${\bf u}$ is a $H_q-$semiclassical functional,
denoting by $\mathcal{A}_{\bf u}$ the set of all pairs $(\Phi,\Psi)$
of nonzero polynomials such that (\ref{grauPsi})--(\ref{EqDistSC1}) holds,
the nonnegative integer number
\begin{equation}\label{def-classSC}
s:=\min_{(\Phi,\Psi)\in\mathcal{A}_{\bf u}}\max\{\mbox{\rm
deg}\,\Phi-2,\mbox{\rm deg}\,\Psi-1\}
\end{equation}
is called the class of ${\bf u}$. The pair
$(\Phi,\Psi)\in\mathcal{A}_{\bf u}$ where the class of ${\bf u}$
is attained is unique. If $\{p_n\}_{n\geq0}$ is an OPS with
respect to a $H_q-$semiclassical functional of class $s$ then
$\{p_n(x)\}_{n\geq0}$ is called a $H_q-$semiclassical OPS of class
$s$. In particular, when $s=0$ (so that $\mbox{\rm
deg}\,\Phi\leq2$ and $\mbox{\rm deg}\,\Psi=1$) $\{p_n(x)\}_{n\geq0}$
is called a $H_q-$classical OPS.

Table 1 summarizes the two canonical forms for the
pairs $(\Phi,\Psi)$ corresponding to the $H_q-$classical OPS
$\{L_n(x;a|q)\}_{n\geq0}$ and $\{U_n(x;a,b|q)\}_{n\geq0}$,
known as the little $q-$Laguerre polynomials
and the little $q-$Jacobi polynomials, respectively.
We only need to include these two sequences of $H_q-$classical OPS because they
are the only ones that will appear in the examples given at the last Section. For details, see \cite{Renato-libro,KM,MedemRenatoPaco}.

{\begin{table}
\scriptsize
\centering
\begin{tabular}{|>{\columncolor[gray]{0.95}}c|c|c|c|}
\hline \rowcolor[gray]{0.95}
\rule{0pt}{1.2em} $p_n(x)$ & $\Phi$ & $\Psi$ & regularity conditions \\
\hline
\rule{0pt}{1.2em} $L_n(x;a|q)$ & $x$& $a^{-1}q^{-1}(q-1)^{-1}(x-1+aq)$ &
$a\neq0$; $a\neq q^{-n-1}$, $n\geq0$ \\
%\hline
\rule{0pt}{1.2em} $U_n(x;a,b|q)$ & $x\big(x-b^{-1}q^{-1}\big)$ &
$a^{-1}b^{-1}q^{-2}(q-1)^{-1}\big(\big(abq^2-1\big)x+1-aq\big)$ &
$ab\neq0\,, q^{-n} $; $a,b\neq q^{-n-1}$, $n\geq0$ \\
\hline
\end{tabular}
\medskip
\caption{The pairs $(\Phi,\Psi)$ for the canonical forms corresponding to the
little $q-$Laguerre OPS and to the little $q-$Jacobi OPS.}\label{Table1}
\end{table}}

Among several very well known characterizations of  $H_q-$semiclassical OPS,
we recall the following one \cite[Proposition 3.1]{Lotfi}:
${\bf u}\in\mathcal{P}^*$ is $H_q-$semiclassical if and only if it is
regular and the associated Stieltjes formal series,
$$S_{\bf u}(z):=-\sum_{n=0}^\infty \frac{u_n}{z^{n+1}}\;,$$
satisfies (formally) the equation
\begin{equation}\label{StieltSC}
A(z)\left(H_{q^{-1}}S_{\bf u}\right)(z)=C(z)S_{\bf u}(z)+D(z)\;,
\end{equation}
where $A$, $B$, and $C$ are polynomials, $A$ being nonzero.
Moreover, if ${\bf u}$ satisfies (\ref{EqDistSC1}),
then the polynomials $A$, $C$ and
$D$ in (\ref{StieltSC}) are given  in terms of the polynomials $\Phi$ and $\Psi$ as follows
$$\begin{array}{l}
A(z)=q^{{\small\mbox{deg}}\Phi}\left(h_{q^{-1}}\Phi\right)(z)\;
,\quad
C(z)=q^{{\small\mbox{deg}}\Phi}\left(q\Psi(z)-\left(H_{q^{-1}}\Phi\right)(z)\right)\;
,\quad
\\D(z)=q^{{\small\mbox{deg}}\Phi}\left(q\left({\bf
u}\theta_0\Psi\right)(z)-\left(H_{q^{-1}}\big({\bf
u}\theta_0\Phi\big)\right)(z)\right)\;, \end{array}$$
where, for each $f\in\mathcal{P}$ and ${\bf u}\in\mathcal{P}'$,
$\theta_0f$ and ${\bf u}f$ are polynomials, defined by
$$
\theta_0f(x):=\frac{f(x)-f(0)}{x}\,,\quad
{\bf u}f(x):=\Big\langle{\bf u}_y,\frac{x f(x)-yf(y)}{x-y}\Big\rangle\,.
$$
(The notation ${\bf u}_y$ means that the functional ${\bf u}$ acts on polynomials of the variable $y$.)
Furthermore, if the polynomials $A$, $C$, and $D$ appearing
in (\ref{StieltSC}) are co-prime (i.e., there is no common zero to
these three polynomials), then the class of ${\bf u}$ is given by
\begin{equation}\label{def-classSC-Stieltjes}
s=\max\{\mbox{\rm deg}\,C-1,\mbox{\rm deg}\,D\}\;
\end{equation}
and the polynomials $\Phi$ and $\Psi$ that appear in (\ref{EqDistSC1}) are
\begin{equation}\label{phipsi}
\Phi(z)=q^{-{\small\mbox{deg}} A}\big(h_q A)(z)\; \mbox{and}\;
\Psi(z)=q^{-{\small\mbox{deg}} A}\left\{\big(H_q
A)(z)+q^{-1}C(z)\right\}.
\end{equation}

Table \ref{Table2} gives the polynomials $A$, $C$ and $D$ appearing in the
$q-$difference equation fulfilled by the formal Stieltjes series for the
$H_q-$classical functionals $(s=0)$ corresponding to the little
$q-$Laguerre OPS and to the little $q-$Jacobi OPS, given in Table \ref{Table1}.

{\begin{table}
\centering
\scriptsize
\begin{tabular}{|>{\columncolor[gray]{0.95}}c|c|c|c|}
\hline \rowcolor[gray]{0.95}
\rule{0pt}{1.2em} $p_n(x)$ & $A$ & $C$ & $D$ \\
\hline
\rule{0pt}{1.2em} $L_n(x;a|q)$ & $x$ & $qa^{-1}(q-1)^{-1}(x-1+aq)-q$& $u_0qa^{-1}(q-1)^{-1}$ \\[0.5em]
\rule{0pt}{1.2em} $U_n(x;a,b|q)$ & $x\big(x-b^{-1}\big)$ & $qa^{-1}b^{-1}(q-1)^{-1}\big(\big(abq^2-1\big)x+1-aq\big)$&$u_0\big(qa^{-1}b^{-1}(q-1)^{-1}\big(abq^2-1\big)-q^2\big)$\\ \rule{0pt}{1.2em} &&$\qquad-q^2\big(q^{-1}+1\big)x+qb^{-1}$ &  \\
\hline
\end{tabular}
\medskip
\caption{The polynomials $A$, $C$, and $D$ appearing in equation (\ref{StieltSC})
corresponding to the families in Table \ref{Table1}.}\label{Table2}
\end{table}}

\subsection{OP via polynomial mappings}\label{sec-poly-map}
Concerning the study of polynomial mappings in the framework of the theory of
OP, several works deal with the analysis of quadratic and cubic transformations (see e.g.
\cite{BarrucandDickinson,ChiharaBUMI,PacoGabriela,PacoZeLAA,PacoZePortMath,PacoZeCubic1,PacoZeCubic2,TounsiBouguerra,TounsiRaddaoui}).
For a general polynomial mapping, the corresponding sequences of OP have been studied by Geronimo and Van Assche \cite{JerVan},
Charris, Ismail, and Monsalve \cite{CharrisIsmail,CharrisIsmailMonsalve}, Peherstorfer \cite{Peherstorfer}, and
de Jesus and Petronilho \cite{MarcioPetronilhoJAT}. In order to describe this mapping,
let $\{p_n(x)\}_{n\geq0}$ be a monic OPS, characterized by its three-term recurrence relation,
 expressed in terms of blocks as
\begin{equation}\label{pnblock1}
\begin{array}r
(x-b_n^{(j)})p_{nk+j}(x)=p_{nk+j+1}(x)+a_n^{(j)}p_{nk+j-1}(x)
\qquad \qquad \qquad \\
\rule{0pt}{1.2em} (j=0,1,\dots, k-1 \, ; \quad n=0,1,2,\ldots)\, ,
\end{array}
\end{equation}
satisfying initial conditions $p_{-1}(x):=0$ and $p_0(x):=1$.
Without loss of generality, we take $a_0^{(0)}:=1$.
In general, the coefficients $a_n^{(j)}$'s and $b_n^{(j)}$'s are complex numbers
with $a_n^{(j)}\neq0$ for every $n$ and $j$.
As a consequence, we can construct determinants $\Delta_n(i,j;x)$,
as introduced by Charris and Ismail in \cite{CharrisIsmail},
and by  Charris, Ismail, and Monsalve in \cite{CharrisIsmailMonsalve}, so that
\begin{equation}\label{Delt0}
\Delta_n(i,j;x):=\left\{
\begin{array}{cl}
0 & \mbox{if $j<i-2,$} \\
\rule{0pt}{1.2em}
1  & \mbox{if $j=i-2,$} \\
\rule{0pt}{1.5em} x-b_n^{(i-1)}  & \mbox{if $j=i-1,$}
\end{array}
\right.
\end{equation}
and, if $j\geq i\geq 1$,
\begin{equation}\label{Delt1}
\Delta_n(i,j;x):=\left|
\begin{array}{cccccc}
x-b_n^{(i-1)} & 1 & 0 &  \dots & 0 & 0  \\
a_n^{(i)} & x-b_n^{(i)} & 1 &  \dots & 0 & 0 \\
0 & a_n^{(i+1)} & x-b_n^{(i+1)} &   \dots & 0 & 0 \\
\vdots & \vdots & \vdots  & \ddots & \vdots & \vdots \\
0 & 0 & 0 &  \ldots & x-b_n^{(j-1)} & 1 \\
0 & 0 & 0 &  \ldots & a_n^{(j)} & x-b_n^{(j)}
\end{array}
\right| \, ,
\end{equation}
for every $n\in\mathbb{N}_0$.
These determinants play a key role in the theory of OP via polynomial mappings.
Taking into account that $\Delta_n(i,j; x)$ is a polynomial
whose degree may exceed $k$, and since in (\ref{pnblock1})
the coefficients $a_n^{(j)}$'s and $b_n^{(j)}$'s were
defined only for $0\leq j\leq k-1$, we adopt the convention
\begin{equation}
b_n^{(k+j)}:=b_{n+1}^{(j)}\; ,\quad a_n^{(k+j)}:=a_{n+1}^{(j)}
\quad (i,j,n\in\mathbb{N}_0)\;,
\label{convention1}
\end{equation}
and so the following useful equality holds
\begin{equation}
\Delta_n(k+i,k+j;x)=\Delta_{n+1}(i,j;x)\;.
\label{convention2}
\end{equation}

\begin{theorem}\label{teobk1p2}\cite[Theorem 2.1]{MarcioPetronilhoJAT}
Let $\{p_n(x)\}_{n\geq0}$ be a monic OPS characterized by the general
blocks of recurrence relations $(\ref{pnblock1})$. Fix
$r_0\in\mathbb{C}$, $m\in\mathbb{N}_0$, $k\in\mathbb{N}$, and
$k\geq2$, with $0\leq m\leq k-1$. Then, there exist polynomials
$\pi_k(x)$ and $\theta_m(x)$ of degrees $k$ and $m$, respectively, and a
monic OPS $\{q_n(x)\}_{n\geq0}$ such that $q_1(0)=-r_0$ and
\begin{equation}
p_{kn+m}(x)=\theta_m(x)\, q_n(\pi_k(x)) \;,\quad n=0,1,2,\ldots
\label{pnblock5p2}
\end{equation}
if and only if the following  conditions hold:
\begin{enumerate}
\item[{\rm (i)}]
$b_n^{(m)}$ is independent of $n$ for $n\geq0$;
\item[{\rm (ii)}]
$\Delta_n(m+2,m+k-1;x)$ is independent of $n$ for $n\geq0$ and for every $x$;
\item[{\rm (iii)}]
$\theta_m (x)\equiv p_m (x)$ and $\Delta_0(m+2,m+k-1;x)$ is divisible by $\theta_m(x)$, i.e., there
exists a polynomial $\eta_{k-1-m}(x)$ of degree $k-1-m$ such that
$$
\Delta_0(m+2,m+k-1;x)=\theta_m(x)\,\eta_{k-1-m}(x)\, ;
$$
\item[{\rm (iv)}]
$r_n(x)$ is independent of $x$ for every $n\geq1$, where
$$
\begin{array}l
\quad\quad r_n(x):=
a_{n}^{(m+1)}\Delta_{n}(m+3,m+k-1;x)-a_0^{(m+1)}\Delta_{0}(m+3,m+k-1;x) \\
\rule{0pt}{1.2em} \qquad\qquad\qquad
+a_{n}^{(m)}\Delta_{n-1}(m+2,m+k-2;x)-a_0^{(m)}\Delta_{0}(1,m-2;x)\,\eta_{k-1-m}(x)\;.
\end{array}
$$
\end{enumerate}
Under such conditions, the polynomials $\theta_m(x)$ and $\pi_k(x)$ are explicitly given by
\begin{equation}\label{Pimka}
\begin{array}{l}
\pi_k(x)=\Delta_0(1,m;x)\,\eta_{k-1-m}(x)-a_0^{(m+1)}\,\Delta_0(m+3,m+k-1;x)+r_0
\; , \\ [0.5em]
\theta_m(x)=\Delta_0(1,m-1;x)\equiv p_m(x)\;,
\end{array}
\end{equation}
and the monic OPS $\{q_n(x)\}_{n\geq0}$ is generated by the three-recurrence relation
\begin{equation}
q_{n+1}(x)=\left(x-r_n\right)q_{n}(x)-s_n q_{n-1}(x) \, , \quad
n=0,1,2,\ldots  \label{pnblock4p2}
\end{equation}
with initial conditions $\, q_{-1}(x)=0\,$ and $\, q_0(x)=1 \,$,
where
\begin{equation}\label{rnsn1}
r_n:=r_0+r_n(0)\; ,\quad s_n:=a_n^{(m)}a_{n-1}^{(m+1)}\cdots a_{n-1}^{(m+k-1)}\; ,
\quad n=1,2,\ldots \, .
\end{equation}
Moreover, for each $j=0,1,2,\ldots,k-1$ and all $n=0,1,2,\ldots$,
\begin{equation}
\begin{array}l
\displaystyle p_{kn+m+j+1}(x)=\frac{1}{\eta_{k-1-m}(x)}\,\left\{ \rule{0pt}{1.2em}
 \Delta_n(m+2,m+j;x)\, q_{n+1}(\pi_k(x)) \right. \qquad \\
\rule{0pt}{1.5em} \qquad\hfill \left. + \left(\prod_{i=1}^{j+1} a_n^{(m+i)}\right)
 \Delta_n(m+j+3,m+k-1;x)\, q_{n}(\pi_k(x))\,\right\} \, .
\end{array}
\label{pnblockmp2}
\end{equation}
\end{theorem}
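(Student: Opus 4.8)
The backbone of the argument is a single determinantal identity that holds for \emph{any} monic OPS satisfying the blocks (\ref{pnblock1}), prior to imposing any of (i)--(iv). The plan is first to record the two Laplace expansions of the tridiagonal determinants: expanding along the last column gives the recurrence in the end index,
\[
\Delta_n(i,j+1;x)=(x-b_n^{(j+1)})\,\Delta_n(i,j;x)-a_n^{(j+1)}\,\Delta_n(i,j-1;x),
\]
while expanding along the first row gives the recurrence in the starting index,
\[
\Delta_n(i-1,j;x)=(x-b_n^{(i-2)})\,\Delta_n(i,j;x)-a_n^{(i-1)}\,\Delta_n(i+1,j;x),
\]
both read off from (\ref{Delt0})--(\ref{Delt1}) and the conventions (\ref{convention1})--(\ref{convention2}). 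From these I would prove, by induction on $j\in\{-1,0,\dots,k-1\}$ for each fixed $n$, the master identity
\[
\Delta_n(m+2,m+k-1;x)\,p_{kn+m+j+1}(x)=\Delta_n(m+2,m+j;x)\,p_{k(n+1)+m}(x)+\Big(\prod_{i=1}^{j+1}a_n^{(m+i)}\Big)\Delta_n(m+j+3,m+k-1;x)\,p_{kn+m}(x).
\]
The key point is that, as a function of $j$, both sides obey the same second-order recurrence (the block recurrence (\ref{pnblock1}) on the left; the two displayed determinant recurrences control the two summands on the right), and the boundary cases $j=-1$ and $j=k-1$ reduce to trivial determinant evaluations. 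This identity is the workhorse: it expresses every off-block polynomial through the two consecutive on-block polynomials $p_{kn+m}$ and $p_{k(n+1)+m}$, with the connecting determinant $\Delta_n(m+2,m+k-1;x)$ (of degree $k-1$) as common denominator.

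For sufficiency I would assume (i)--(iv) and extract the three-term recurrence for the on-block sequence $P_n:=p_{kn+m}$. Writing the block recurrence at the junction index $k(n+1)+m$ and substituting the master identity for $p_{k(n+1)+m+1}$ (block $n+1$, $j=0$) and for $p_{k(n+1)+m-1}$ (block $n$, $j=k-2$), using (i) to freeze $b_n^{(m)}=:b^{(m)}$ and (ii) to make $\Delta(x):=\Delta_n(m+2,m+k-1;x)$ independent of $n$, I obtain
\[
P_{n+1}(x)=\Big[(x-b^{(m)})\Delta(x)-a_n^{(m+1)}\Delta_n(m+3,m+k-1;x)-a_n^{(m)}\Delta_{n-1}(m+2,m+k-2;x)\Big]P_n(x)-s_nP_{n-1}(x)
\]
for $n\geq1$, where $s_n=a_n^{(m)}a_{n-1}^{(m+1)}\cdots a_{n-1}^{(m+k-1)}$ falls out as the coefficient of $P_{n-1}$, matching (\ref{rnsn1}). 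The bracket is monic of degree $k$, and condition (iv) is exactly what is needed to split it as $\pi_k(x)-r_n$ with $\pi_k$ independent of $n$ and $r_n=r_0+r_n(0)$; the closed form (\ref{Pimka}) then follows from the end-index recurrence $\Delta_0(1,m;x)=(x-b^{(m)})\theta_m(x)-a_0^{(m)}\Delta_0(1,m-2;x)$ together with the factorization $\Delta(x)=\theta_m(x)\eta_{k-1-m}(x)$ of (iii). Setting $q_0:=1$, $q_1(x):=x-r_0$ (so $q_1(0)=-r_0$), and defining $q_n$ by (\ref{pnblock4p2}), a short induction yields $P_n=\theta_m\,q_n(\pi_k)$, the base case $P_1=(\pi_k-r_0)\theta_m$ being a direct computation; since $s_n\neq0$, Favard's theorem makes $\{q_n\}$ a monic OPS. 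Finally, dividing the master identity by $\theta_m\eta_{k-1-m}=\Delta_0(m+2,m+k-1;x)$ and cancelling $\theta_m$ produces (\ref{pnblockmp2}).

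For necessity I would reverse this. Assuming $p_{kn+m}=\theta_m\,q_n(\pi_k)$ for a monic OPS $\{q_n\}$ with recurrence $q_{n+1}(y)=(y-\beta_n)q_n(y)-\gamma_n q_{n-1}(y)$, lifting it gives the exact relation $P_{n+1}=(\pi_k(x)-\beta_n)P_n-\gamma_n P_{n-1}$. Comparing this, coefficient of $x$ by coefficient of $x$, with the unconditional relation produced by the master identity and the block recurrence forces the required structure: the linear coefficient pins down (i), the requirement that the connecting expression be a fixed polynomial in $x$ forces the $n$-independence (ii) and the $x$-rigidity (iv), while the fact that $\theta_m=p_m$ divides every $p_{kn+m}$, combined with the polynomial (not merely rational) character of the off-block $p_{kn+m+j+1}$ in the master identity, forces the divisibility (iii).

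The step I expect to be most delicate is twofold. First, the inductive proof of the master identity is bookkeeping-heavy: one must keep the two determinant recurrences (in the starting index and in the end index) aligned with the growing product $\prod_{i=1}^{j+1}a_n^{(m+i)}$ and correctly invoke the periodic conventions (\ref{convention1})--(\ref{convention2}) when indices exceed $k$. Second, in the necessity direction the genuinely structural point is to show that the common denominator $\Delta_n(m+2,m+k-1;x)$ must be independent of $n$ and divisible by $\theta_m$; this is where the mere existence of the polynomial (rather than rational) representation $p_{kn+m}=\theta_m\,q_n(\pi_k)$ does the essential work, and I would isolate it as a separate lemma.
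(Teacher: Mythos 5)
You should first note that the paper you are being compared against does not actually prove this theorem: it is imported verbatim, with a citation, from de Jesus--Petronilho \cite{MarcioPetronilhoJAT}, so the benchmark is that source's argument, and your plan reconstructs exactly its methodology (the Charris--Ismail--Monsalve determinant blocks). Your ``master identity'' is precisely the unconditional, denominator-cleared form of (\ref{pnblockmp2}), and it is correct: the degrees match, the boundary cases $j=-1$ and $j=k-1$ collapse as you say, and both sides obey the same second-order recurrence in $j$ (block recurrence on the left; end-index and first-row determinant expansions on the right). Your sufficiency derivation is sound: the junction recurrence yields the bracket $\bigl(x-b^{(m)}\bigr)\Delta(x)-a_n^{(m+1)}\Delta_n(m+3,m+k-1;x)-a_n^{(m)}\Delta_{n-1}(m+2,m+k-2;x)$ with $s_n$ as in (\ref{rnsn1}), condition (iv) splits it as $\pi_k(x)-r_n$, and the expansion $\Delta_0(1,m;x)=\bigl(x-b^{(m)}\bigr)p_m(x)-a_0^{(m)}\Delta_0(1,m-2;x)$ together with (iii) gives (\ref{Pimka}); Favard and the induction $P_n=\theta_m q_n(\pi_k)$ then close the argument. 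One bookkeeping caveat you underestimate: an induction on $j$ with a second-order recurrence needs two consecutive base cases, and $j=0$ is \emph{not} a trivial determinant evaluation --- it encodes transfer across an entire block. It follows from the standard transfer formula $p_{kn+m+j+1}=\Delta_n(m+2,m+j;x)\,p_{kn+m+1}-a_n^{(m+1)}\Delta_n(m+3,m+j;x)\,p_{kn+m}$ evaluated at $j=k-1$, which itself requires a preliminary (easy) induction; say so.

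The genuine gap is in your necessity direction: ``comparing coefficient by coefficient \ldots forces'' is a conclusion, not an argument, and as written it does not follow. What makes it work are three ingredients you never name. First, $\theta_m=p_m$ is immediate from the $n=0$ case of (\ref{pnblock5p2}), and after cancelling $\theta_m$ the polynomials $q_n(\pi_k)$ and $q_{n-1}(\pi_k)$ are \emph{coprime} (consecutive orthogonal polynomials share no zero, and composition with $\pi_k$ preserves this); only this coprimality legitimizes separately identifying the rational multipliers of $P_n$ and $P_{n-1}$ after clearing the two \emph{distinct} denominators $\Delta_n$ and $\Delta_{n-1}$. Second, the $P_{n-1}$-comparison yields only $\Delta_{n-1}/\Delta_n=\,$constant; you need the observation that both determinants are monic of degree $k-1$ to upgrade proportionality to equality, which is (ii) and simultaneously gives $\gamma_n=s_n$. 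The $P_n$-comparison then gives (i) by reading off the coefficient of $x^{k-1}$ in $\bigl(x-b_n^{(m)}\bigr)\Delta(x)=\pi_k(x)-\beta_n+a_n^{(m+1)}\Delta_n(m+3,m+k-1;x)+a_n^{(m)}\Delta_{n-1}(m+2,m+k-2;x)$, and its $n$-dependence gives (iv). Third, (iii) comes from the $n=0$ instance $\Delta(x)\,p_{m+1}(x)=p_m(x)\bigl(\pi_k(x)-r_0+a_0^{(m+1)}\Delta_0(m+3,m+k-1;x)\bigr)$ together with coprimality of the consecutive pair $p_m,p_{m+1}$, forcing $p_m\mid\Delta$ and defining $\eta_{k-1-m}:=\Delta/p_m$. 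You correctly isolate this as the structural crux and propose a lemma, but the lemma's content --- coprimality plus monicity --- is exactly what is missing; with it supplied, your plan coincides with the proof in \cite{MarcioPetronilhoJAT}.
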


\begin{lemma}\cite[Lemma 3.3]{KMP}\label{Stieltjes-seriesA}
Under the conditions of Theorem $\ref{teobk1p2}$, the formal
Stieltjes series $S_{\bf u}(z):=-\sum_{n=0}^\infty u_n/z^{n+1}$
and $S_{\bf v}(z):=-\sum_{n=0}^\infty v_n/z^{n+1}$ associated with
the regular moment linear functionals ${\bf u}$ and ${\bf v}$ with
respect to which $\{p_n(x)\}_{n\geq0}$ and $\{q_n(x)\}_{n\geq0}$ are
orthogonal (resp.) are related by
\begin{equation}\label{SuSv}
S_{\rm u}(z)=\frac{u_0}{v_0}\,\frac{-v_0\Delta_0(2,m-1;z)+ \left(
\prod_{j=1}^m a_0^{(j)} \right)\,\eta_{k-1-m}(z)\,S_{\bf
v}(\pi_k(z))}{\theta_m(z)}\, .
\end{equation}
\end{lemma}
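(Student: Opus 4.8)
The plan is to write each Stieltjes series as a limit of ratios of associated to orthogonal polynomials, and then to transport the block structure of Theorem~\ref{teobk1p2} from $\{p_n\}$ to its sequence of associates.

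First I would recall the standard fact that, formally, $S_{\bf u}(z)=-\langle{\bf u}_t,(z-t)^{-1}\rangle$, so that writing $p_N^{[1]}(z):=\langle{\bf u}_t,(p_N(z)-p_N(t))/(z-t)\rangle$ for the associated polynomials of the first kind, orthogonality gives $p_N(z)S_{\bf u}(z)+p_N^{[1]}(z)=O(z^{-N-1})$ and hence
$$S_{\bf u}(z)=-\lim_{N\to\infty}\frac{p_N^{[1]}(z)}{p_N(z)}\,,$$
the limit being taken coefficientwise in the ring of formal series in $z^{-1}$; likewise $S_{\bf v}(w)=-\lim_n q_n^{[1]}(w)/q_n(w)$ with $q_n^{[1]}$ the associates of $\{q_n\}$, where $p_1^{[1]}=u_0$ and $q_1^{[1]}=v_0$.

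Next comes the structural step. Since $\{p_N^{[1]}\}$ solves the same block recurrence (\ref{pnblock1}) as $\{p_N\}$ (only the initial data change, while the coefficients subject to conditions (i)--(iv) are identical), the transfer-matrix computation that produces (\ref{pnblock4p2})--(\ref{pnblockmp2}) applies verbatim: the block-boundary values $\widehat q_n:=p_{kn+m}^{[1]}(z)/\theta_m(z)$ solve the recurrence (\ref{pnblock4p2}) in the variable $\pi_k(z)$, and within each block $p_{kn+m+j+1}^{[1]}$ is given by (\ref{pnblockmp2}) with $q_n,q_{n+1}$ replaced by $\widehat q_n,\widehat q_{n+1}$. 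As $\{q_n(\pi_k)\}$ and $\{q_n^{[1]}(\pi_k)\}$ span the solution space of (\ref{pnblock4p2}), there are $z$-rational constants $\lambda,\mu$ (independent of $n$) with $\widehat q_n=\lambda\,q_n(\pi_k)+\mu\,q_n^{[1]}(\pi_k)$. Taking the ratio along $N=kn+m$, where $p_{kn+m}=\theta_m q_n(\pi_k)$ by (\ref{pnblock5p2}) and $p_{kn+m}^{[1]}=\theta_m\widehat q_n$, the factor $\theta_m$ cancels and letting $n\to\infty$ yields $S_{\bf u}=-\lambda+\mu\,S_{\bf v}(\pi_k)$, so it remains only to identify $\lambda$ and $\mu$. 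Evaluating the combination at $n=0$ (where $q_0=1$, $q_0^{[1]}=0$) gives $\lambda=\widehat q_0=p_m^{[1]}/\theta_m$, and the identification of the associated polynomial with the shifted tridiagonal determinant, $p_m^{[1]}=u_0\,\Delta_0(2,m-1;z)$, produces $\lambda=u_0\Delta_0(2,m-1;z)/\theta_m(z)$, matching the first term of (\ref{SuSv}).

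The crux is the computation of $\mu$, and this is the step I expect to be the main obstacle. I would read off $\widehat q_1$ from the $n=0$, $j=0$ instance of the block formula, namely $p_{m+1}^{[1]}=\eta_{k-1-m}^{-1}\{\widehat q_1+a_0^{(m+1)}\Delta_0(m+3,m+k-1;z)\,\widehat q_0\}$ (using $\Delta_0(m+2,m;z)=1$ from (\ref{Delt0})), together with $p_{m+1}^{[1]}=u_0\Delta_0(2,m;z)$ and the explicit expression (\ref{Pimka}) for $\pi_k$. Substituting $\widehat q_1$ and $\widehat q_0=\lambda$ into $\mu=(\widehat q_1-\lambda\,q_1(\pi_k))/v_0$, with $q_1(\pi_k)=\pi_k-r_0$, the two terms carrying $a_0^{(m+1)}\Delta_0(m+3,m+k-1;z)$ cancel, leaving
$$\mu=\frac{u_0\,\eta_{k-1-m}(z)}{v_0\,\theta_m(z)}\Big(\Delta_0(1,m-1;z)\,\Delta_0(2,m;z)-\Delta_0(2,m-1;z)\,\Delta_0(1,m;z)\Big)\,.$$
The bracket is a continuant (Sylvester--Dodgson) identity for the tridiagonal determinants (\ref{Delt0})--(\ref{Delt1}), equal to $\prod_{j=1}^{m}a_0^{(j)}$; since $\theta_m=\Delta_0(1,m-1)$ by (\ref{Pimka}), this delivers $\mu=\frac{u_0}{v_0}\big(\prod_{j=1}^{m}a_0^{(j)}\big)\eta_{k-1-m}/\theta_m$ and hence, after assembling $S_{\bf u}=-\lambda+\mu\,S_{\bf v}(\pi_k)$ over the common denominator $\theta_m$, exactly (\ref{SuSv}). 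I expect the bookkeeping of determinant indices and the verification of the continuant identity to be the only genuinely delicate points; the degree check ($\deg\Delta_0(2,m-1)=m-1=\deg p_m^{[1]}$, with matching leading coefficient $u_0$) is a useful consistency test throughout, and the case $m=0$, $\pi_k(x)=x^k$ (where $\Delta_0(2,-1)=0$ and the empty product is $1$) recovers the expected $S_{\bf u}(z)=\frac{u_0}{v_0}\eta_{k-1}(z)S_{\bf v}(z^k)$.
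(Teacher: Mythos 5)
The paper itself offers no proof of Lemma~\ref{Stieltjes-seriesA}: it is quoted verbatim from \cite{KMP} (Lemma 3.3 there), so there is no internal argument to compare yours against. Judged on its own, your proof is correct and complete up to small, fixable bookkeeping, and it is the classical Markov/continued-fraction (convergents) argument rendered formal --- very much in the spirit in which the formula arises in the cited literature, since the term $\Delta_0(2,m-1;z)$ in the numerator of \eqref{SuSv} is exactly $p_m^{[1]}(z)/u_0$, betraying its associated-polynomial origin. I checked the points you flag as delicate, and they all hold. (a) Transplanting the block machinery to $\{p_N^{[1]}\}_{N\geq0}$ is legitimate, but you should say why explicitly: $p_N^{[1]}$ satisfies \eqref{pnblock1} only for $N\geq1$, and both the within-block Cramer solution behind \eqref{pnblockmp2} and the decimated three-term recurrence \eqref{pnblock4p2} for $\widehat q_n$ at level $n\geq1$ use the recurrence only at indices $N\geq m+1\geq1$; since you fix $\lambda,\mu$ by matching at $n=0,1$ and propagate with $n\geq1$, the failing relation at $N=0$ is never invoked. (b) The computation of $\mu$ is right: with $\widehat q_1=\eta_{k-1-m}\,p_{m+1}^{[1]}-a_0^{(m+1)}\Delta_0(m+3,m+k-1;z)\,\widehat q_0$, $p_{m+1}^{[1]}=u_0\Delta_0(2,m;z)$, and $q_1(\pi_k)-0=\pi_k-r_0=\Delta_0(1,m;z)\,\eta_{k-1-m}-a_0^{(m+1)}\Delta_0(m+3,m+k-1;z)$ from \eqref{Pimka}, the $a_0^{(m+1)}$ terms cancel exactly as you claim. (c) The continuant identity is a two-line induction: setting $C_m:=\Delta_0(1,m-1;z)\Delta_0(2,m;z)-\Delta_0(2,m-1;z)\Delta_0(1,m;z)$ and expanding both determinants of the form $\Delta_0(\cdot,m;z)$ along their last row gives $C_m=a_0^{(m)}C_{m-1}$ with $C_0=1$, whence $C_m=\prod_{j=1}^m a_0^{(j)}$. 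Together with the standard estimate $p_N(z)S_{\bf u}(z)+p_N^{[1]}(z)=O(z^{-N-1})$ (so that ratios of convergents converge coefficientwise in $\mathbb{C}((z^{-1}))$, also after composing with $w=\pi_k(z)$) and the independence check via the Casoratian $q_1q_0^{[1]}-q_0q_1^{[1]}=-v_0\neq0$ with $s_n\neq0$, this assembles into a self-contained proof of \eqref{SuSv}; your $m=0$ sanity check moreover reproduces \eqref{Sv}, which is the only instance of the lemma the present paper actually uses.
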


\section{Polynomial mappings and $H_q-$semiclassical OP}\label{poly-semi-OPS}

For fixed $\pi\in\mathcal{P}$, let
$\sigma_{\pi}:\mathcal{P}\to\mathcal{P}$ be the linear operator
such that $\sigma_\pi[f]:=f\circ\pi$ for every $f\in\mathcal{P}$,
and define $\sigma_{\pi}^*:\mathcal{P}^*\to\mathcal{P}^*$ by
duality. Henceforth,
$$
\sigma_{\pi}[f](x):=f\big(\pi(x)\big)\;,\quad
\langle\sigma_{\pi}^*(\textbf{u}),f\rangle:=\langle\textbf{u},\sigma_{\pi}[f]\rangle\;,
\quad f\in\mathcal{P}\;,\; \textbf{u}\in\mathcal{P}^\prime\,.
$$

\begin{lemma}\label{lemma1}
For fixed $f\in\mathcal{P}$ and $\emph{\textbf{u}}\in\mathcal{P}^\prime$,
the following relations hold:
\begin{eqnarray}
\label{re1} f\,\sigma_{x^k}^*(\emph{\textbf{u}})=\sigma_{x^k}^*\big(\sigma_{x^k}[f]\emph{\textbf{u}})\,; \\ [0.25em]
\label{re2} H_q\left(\,\sigma_{x^k}[f]\right)(x)=[k]_qx^{k-1}\sigma_{x^k}\big[H_{q^k}f](x)\,; \\ [0.25em]
\label{re3} \sigma_{x^k}^*(H_q\emph{\textbf{u}})=[k]_q\,H_{q^k}\,\big(\sigma_{x^k}^*\big(x^{k-1}\emph{\textbf{u}}))\,.
\end{eqnarray}
\end{lemma}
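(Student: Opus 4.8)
The plan is to prove the three identities by working directly from the definitions of $\sigma_{x^k}$, $\sigma_{x^k}^*$, $H_q$, and left-multiplication, checking each claim on monomials $f(x)=x^m$ and then invoking linearity. Since all the operators involved are linear in $f$ (and in $\mathbf{u}$ where applicable), and since $\{x^m\}_{m\geq0}$ is a basis for $\mathcal{P}$, it suffices to verify the identities for $f(x)=x^m$; this reduces each claim to an elementary computation with $q$-numbers.

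For \eqref{re1}, I would compute $\langle f\,\sigma_{x^k}^*(\mathbf{u}),g\rangle = \langle \sigma_{x^k}^*(\mathbf{u}), f g\rangle = \langle \mathbf{u}, \sigma_{x^k}[fg]\rangle$ for arbitrary $g\in\mathcal{P}$, and observe that $\sigma_{x^k}$ is an algebra homomorphism, so $\sigma_{x^k}[fg] = \sigma_{x^k}[f]\,\sigma_{x^k}[g]$. Hence the last expression equals $\langle \sigma_{x^k}[f]\,\mathbf{u}, \sigma_{x^k}[g]\rangle = \langle \sigma_{x^k}^*(\sigma_{x^k}[f]\,\mathbf{u}), g\rangle$, which gives \eqref{re1} after stripping the test polynomial $g$. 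For \eqref{re2}, taking $f(x)=x^m$ so that $\sigma_{x^k}[f](x)=x^{km}$, the left-hand side is $H_q(x^{km}) = [km]_q x^{km-1}$, while the right-hand side is $[k]_q x^{k-1}\sigma_{x^k}[\,[m]_{q^k}x^{m-1}] = [k]_q[m]_{q^k}x^{k-1}x^{k(m-1)} = [k]_q[m]_{q^k}x^{km-1}$. The identity then reduces to the $q$-number factorization $[km]_q = [k]_q\,[m]_{q^k}$, which is immediate from $[n]_q=(q^n-1)/(q-1)$.

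Finally, \eqref{re3} is the dual counterpart and I would derive it by combining \eqref{re2} with the definitions of $H_q$ on functionals. For arbitrary $f\in\mathcal{P}$ compute
\begin{equation*}
\langle \sigma_{x^k}^*(H_q\mathbf{u}), f\rangle = \langle H_q\mathbf{u}, \sigma_{x^k}[f]\rangle = -\langle \mathbf{u}, H_q(\sigma_{x^k}[f])\rangle = -[k]_q\langle \mathbf{u}, x^{k-1}\sigma_{x^k}[H_{q^k}f]\rangle,
\end{equation*}
using \eqref{re2} in the last step. Now rewrite $x^{k-1}\sigma_{x^k}[H_{q^k}f] = \sigma_{x^k}[y^{?}]\cdots$—more precisely, I would pull the factor $x^{k-1}$ into the functional using the left-multiplication definition, obtaining $\langle x^{k-1}\mathbf{u}, \sigma_{x^k}[H_{q^k}f]\rangle = \langle \sigma_{x^k}^*(x^{k-1}\mathbf{u}), H_{q^k}f\rangle = -\langle H_{q^k}(\sigma_{x^k}^*(x^{k-1}\mathbf{u})), f\rangle$. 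Chaining these equalities yields $\langle \sigma_{x^k}^*(H_q\mathbf{u}), f\rangle = [k]_q\langle H_{q^k}(\sigma_{x^k}^*(x^{k-1}\mathbf{u})), f\rangle$, which is exactly \eqref{re3}.

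The only delicate point is the bookkeeping in \eqref{re3}, where one must be careful to apply the left-multiplication identity in the correct order—first moving $x^{k-1}$ onto $\mathbf{u}$, then recognizing the $\sigma_{x^k}^*$ of the multiplied functional before applying the definition of $H_{q^k}$ on a functional. The signs from the two applications of $\langle H_q\mathbf{u},\cdot\rangle = -\langle\mathbf{u},H_q\cdot\rangle$ cancel, and I expect this sign tracking, together with the correct placement of the $x^{k-1}$ factor, to be the main source of potential error. Everything else is a routine unwinding of definitions once \eqref{re2} and the factorization $[km]_q=[k]_q[m]_{q^k}$ are in hand.
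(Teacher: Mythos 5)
Your proof is correct and follows essentially the same route as the paper: identity (\ref{re2}) rests on the factorization $[km]_q=[k]_q\,[m]_{q^k}$ (implicit in the paper's one-line direct computation $\frac{f(q^kx^k)-f(x^k)}{(q-1)x}=[k]_qx^{k-1}\big(H_{q^k}f\big)(x^k)$), and (\ref{re3}) is obtained by exactly the paper's duality argument, starting from $\langle\sigma_{x^k}^*(H_q\mathbf{u}),f\rangle=-\langle\mathbf{u},H_q(\sigma_{x^k}[f])\rangle$ and applying (\ref{re2}); your sign bookkeeping and the placement of the factor $x^{k-1}$ check out. The only cosmetic differences are that you verify (\ref{re2}) on monomials and invoke linearity rather than computing for general $f$, and that you actually supply a proof of (\ref{re1}) via the homomorphism property $\sigma_{x^k}[fg]=\sigma_{x^k}[f]\,\sigma_{x^k}[g]$, where the paper merely cites the $k=3$ case from the literature and notes that the argument extends.
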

\begin{proof}
Relation (\ref{re1}) was stated in \cite[Lemma 2.1]{TounsiRaddaoui} for $k=3$,
being the proof similar for any positive integer number $k$.
Relation (\ref{re2}) holds, since
$$
H_q\big(\sigma_{x^k}[f]\big)(x)=\dfrac{f\big(q^kx^k\big)-f\big(x^k\big)}{(q-1)x}=\big(H_{q^k}
f\big)\big(x^k\big)[k]_qx^{k-1}=[k]_qx^{k-1}\sigma_{x^k}\big[H_{q^k}f\big](x)\,.
$$
Finally, (\ref{re3}) follows from (\ref{re2}) taking into account the equality
$$\left\langle\sigma_{x^k}^*\big(H_q\textbf{u}\big),f\right\rangle=
-\left\langle\textbf{u},\big(H_{q}\big(\sigma_{x^k}[f]\big)\right\rangle\;.$$
\end{proof}
\begin{lemma}{\rm\cite[Lemma 3.2]{KMP}}\label{Plemma}
Let $\{p_n (x)\}_{n\geq0}$ and $\{q_n(x)\}_{n\geq0}$ be two monic OPS
satisfying
$$
p_{nk}(x)=q_n\left(x^k\right)\;,\quad n=0,1,2,\ldots\,.
$$
Let ${\bf u}$ and ${\bf v}$ be the regular functionals in $\mathcal{P}^*$
with respect to which $\{p_n(x)\}_{n\geq0}$ and $\{q_n(x)\}_{n\geq0}$ are orthogonal (resp.), and
let $\{{\bf a}_n\}_{n\geq0}$ and $\{{\bf b}_n\}_{n\geq0}$ be the associated dual basis.
Then the following relations hold
\begin{align}
\label{rel1} \sigma_{x^k}^*\left({\bf a}_{nk+j}\right)
=\delta_{j,0}{\bf b}_{n}
\quad (j=0,1,\ldots,k-1, \; n=0,1,2,\ldots)\;, \\
\label{Mrel1}
\sigma_{x^k}^*\left(p_j\emph{\textbf{u}}\right)=\delta_{j,0}\,
v_0^{-1} u_0\emph{\textbf{v}} \quad (j=0,1,\ldots,k-1)\;.
\end{align}
\end{lemma}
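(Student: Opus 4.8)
The plan is to verify both identities by testing the functionals on a convenient basis of $\mathcal{P}$ and then to exploit the standard representation of a dual basis in terms of the orthogonality functional. Throughout I would write $h_n:=\langle{\bf u},p_n^2\rangle$ and $\tilde h_n:=\langle{\bf v},q_n^2\rangle$ (both nonzero, by regularity) and recall the elementary fact that, for a monic OPS, the dual basis elements are given by ${\bf a}_n=h_n^{-1}\,p_n{\bf u}$ and ${\bf b}_n=\tilde h_n^{-1}\,q_n{\bf v}$. Indeed,
\[
\langle p_n{\bf u},p_m\rangle=\langle{\bf u},p_np_m\rangle=h_n\,\delta_{n,m}\,,
\]
and similarly for ${\bf v}$; since a functional is determined by its action on the basis $\{p_m\}_{m\geq0}$, these representations are forced.

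To prove (\ref{rel1}) I would test both sides against the polynomials $q_m$, $m\geq0$, which form a basis of $\mathcal{P}$ because $\deg q_m=m$. Using the definition of $\sigma_{x^k}^*$ together with the hypothesis $q_m(x^k)=p_{mk}(x)$,
\[
\langle\sigma_{x^k}^*({\bf a}_{nk+j}),q_m\rangle=\langle{\bf a}_{nk+j},q_m(x^k)\rangle=\langle{\bf a}_{nk+j},p_{mk}\rangle=\delta_{nk+j,\,mk}\,.
\]
The key combinatorial observation is that, since $0\leq j\leq k-1$, the equality $nk+j=mk$ forces $j=0$ and $m=n$; hence the right-hand side equals $\delta_{j,0}\,\delta_{n,m}=\langle\delta_{j,0}{\bf b}_n,q_m\rangle$. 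As this holds for every $m$, the two functionals coincide, which is exactly (\ref{rel1}).

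To obtain (\ref{Mrel1}) I would specialize (\ref{rel1}) to $n=0$, so that $\sigma_{x^k}^*({\bf a}_j)=\delta_{j,0}{\bf b}_0$ for $0\leq j\leq k-1$. Substituting the dual-basis representations ${\bf a}_j=h_j^{-1}p_j{\bf u}$ and ${\bf b}_0=\tilde h_0^{-1}q_0{\bf v}$ and using $p_0=q_0=1$ yields $h_j^{-1}\sigma_{x^k}^*(p_j{\bf u})=\delta_{j,0}\,\tilde h_0^{-1}{\bf v}$. The factor $\delta_{j,0}$ collapses $h_j$ to $h_0=\langle{\bf u},1\rangle=u_0$, while $\tilde h_0=\langle{\bf v},1\rangle=v_0$, so that $\sigma_{x^k}^*(p_j{\bf u})=\delta_{j,0}\,u_0v_0^{-1}{\bf v}$, establishing (\ref{Mrel1}).

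I do not expect a genuine obstacle here, since the argument is purely formal. The only points demanding care are the index analysis in the Kronecker delta---where one must use the restriction $0\leq j\leq k-1$ to conclude that $nk+j=mk$ admits no solution other than $j=0,\ m=n$---and the observation that testing against $\{q_m\}_{m\geq0}$ is sufficient, which rests on this family being a basis of $\mathcal{P}$.
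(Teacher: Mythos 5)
Your proof is correct and complete: the dual-basis representation ${\bf a}_n=\langle{\bf u},p_n^2\rangle^{-1}p_n{\bf u}$ is justified properly, testing $\sigma_{x^k}^*({\bf a}_{nk+j})$ against the basis $\{q_m\}_{m\geq0}$ via $q_m(x^k)=p_{mk}(x)$ reduces (\ref{rel1}) to the index observation that $nk+j=mk$ with $0\leq j\leq k-1$ forces $j=0$ and $m=n$, and specializing (\ref{rel1}) at $n=0$ then yields (\ref{Mrel1}) since $h_0=u_0$ and $\tilde h_0=v_0$. Note that the paper itself supplies no proof — the lemma is quoted from \cite[Lemma 3.2]{KMP} — and your argument is essentially the standard one used in that reference, specialized to the mapping $\pi_k(x)=x^k$ with $m=0$ and $\theta_0\equiv1$.
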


\begin{lemma}\label{Stieltjes-series}
Let $\{p_n (x)\}_{n\geq0}$ and $\{q_n(x)\}_{n\geq0}$ be two monic OPS satisfying
$$
p_{nk}(x)=q_n\left(x^k\right)\;,\quad n=0,1,2,\ldots\;.
$$
(Hence one has $\pi_k(x):=x^k$, $\theta_m\equiv1$, and $m=0$ in Theorem $\ref{teobk1p2}$,
with conditions {\rm (i)}--{\rm (iv)} therein.)
Let ${\bf u}$ and ${\bf v}$ be the regular functionals in $\mathcal{P}^*$
with respect to which $\{p_n(x)\}_{n\geq0}$ and $\{q_n(x)\}_{n\geq0}$ are orthogonal (resp.).
Then, the associated formal Stieltjes series $S_{\bf u}(z)$
and $S_{\bf v}(z)$ satisfy
\begin{equation}\label{SuSvq}
[k]_{q^{-1}}z^{k-1}\eta_{k-1}(q^{-1}z)\big(H_{q^{-k}}S_{\bf v}\big)(z^k)
=\frac{v_0}{u_0}\,\big(H_{q^{-1}}\,S_{\bf u}\big)(z)-\big(H_{q^{-1}}\eta_{k-1}\big)(z)\,S_{\bf v}(z^k)\, .
\end{equation}
\end{lemma}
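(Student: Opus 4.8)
The plan is to deduce (\ref{SuSvq}) from the general Stieltjes relation in Lemma \ref{Stieltjes-seriesA} by first specializing it to the present data and then applying the Hahn operator $H_{q^{-1}}$ to both sides. First I would set $m=0$, $\theta_0\equiv1$, $\pi_k(z)=z^k$, $\eta_{k-1-m}=\eta_{k-1}$ in (\ref{SuSv}). The product $\prod_{j=1}^{0}a_0^{(j)}$ is empty, hence equals $1$, while $\Delta_0(2,m-1;z)=\Delta_0(2,-1;z)=0$ by the convention (\ref{Delt0}) (indeed $-1<2-2$). Thus (\ref{SuSv}) collapses to the simple functional relation
\begin{equation}\label{Sucollapse}
S_{\bf u}(z)=\frac{u_0}{v_0}\,\eta_{k-1}(z)\,S_{\bf v}(z^k)\,,
\end{equation}
which is the starting point of the argument.

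Next I would apply $H_{q^{-1}}$ to both sides of (\ref{Sucollapse}). On the left this gives $\frac{u_0}{v_0}\bigl(H_{q^{-1}}S_{\bf u}\bigr)(z)$ by linearity. On the right I would invoke the Leibniz rule for the Hahn operator, in the form $H_p(gh)(z)=g(pz)(H_ph)(z)+(H_pg)(z)h(z)$, which follows at once from the definition of $H_p$ and extends verbatim to formal series in $z^{-1}$, taking $p=q^{-1}$, $g=\eta_{k-1}$ and $h(z)=S_{\bf v}(z^k)$. This produces the term $\eta_{k-1}(q^{-1}z)\bigl(H_{q^{-1}}h\bigr)(z)$ together with $\bigl(H_{q^{-1}}\eta_{k-1}\bigr)(z)\,S_{\bf v}(z^k)$, where the shifted argument $q^{-1}z$ inside $\eta_{k-1}$ is exactly the one appearing in the statement.

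The remaining ingredient is the evaluation of $\bigl(H_{q^{-1}}h\bigr)(z)$ with $h(z)=S_{\bf v}(z^k)=\sigma_{z^k}[S_{\bf v}](z)$. For this I would use relation (\ref{re2}) of Lemma \ref{lemma1} with $q$ replaced by $q^{-1}$ and $f=S_{\bf v}$, which gives $\bigl(H_{q^{-1}}h\bigr)(z)=[k]_{q^{-1}}z^{k-1}\bigl(H_{q^{-k}}S_{\bf v}\bigr)(z^k)$. Although (\ref{re2}) is stated for polynomials, the identity is a termwise formal computation, so it holds for the formal series $S_{\bf v}$ as well; concretely it reduces to the scalar identity $[k]_{q^{-1}}/(q^{-k}-1)=1/(q^{-1}-1)$. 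Substituting this expression into the Leibniz expansion and isolating the term containing $H_{q^{-k}}S_{\bf v}$ yields precisely (\ref{SuSvq}).

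I expect the only delicate point to be the justification that the Leibniz rule and relation (\ref{re2}) may be applied to the formal Stieltjes series rather than to genuine polynomials. This is routine once one checks that $H_{q^{-1}}$ and the substitution $z\mapsto z^k$ act termwise and consistently on formal Laurent series in $z^{-1}$; all the other steps are purely algebraic rearrangements.
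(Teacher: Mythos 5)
Your proposal is correct and follows essentially the same route as the paper: the paper likewise specializes Lemma \ref{Stieltjes-seriesA} to $m=0$ to get $S_{\bf u}(z)=\frac{u_0}{v_0}\eta_{k-1}(z)S_{\bf v}(z^k)$, and then computes $\big(H_{q^{-1}}S_{\bf u}\big)(z)$ by adding and subtracting $\eta_{k-1}(q^{-1}z)S_{\bf v}(z^k)$ in the difference quotient, which is exactly your Leibniz rule combined with the chain-rule identity (\ref{re2}) for $q^{-1}$ (including the same scalar identity $[k]_{q^{-1}}=(q^{-k}-1)/(q^{-1}-1)$). The only cosmetic difference is that you invoke these as named identities, with an explicit check of the vanishing term $\Delta_0(2,-1;z)=0$ and the formal-series justification, whereas the paper carries out the computation in one displayed chain.
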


\begin{proof} By Lemma \ref{Stieltjes-seriesA} we have
\begin{equation}\label{Sv}
S_{\textbf{u}}(z)=\frac{u_0}{v_0}\eta_{k-1}(z)S_{\textbf{v}}(z^k)\,,
\end{equation}
hence
$$
\begin{array}{l}
\big(H_{q^{-1}}S_{\textbf{u}}\big)(z)
=\dfrac{S_{\textbf{u}}\big(q^{-1}z\big)-S_{\textbf{u}}(z)}{\big(q^{-1}-1\big)z}
=\dfrac{u_0}{v_0}\dfrac{\eta_{k-1}\big(q^{-1}z\big)S_{\textbf{v}}\big(q^{-k}z^k\big)-
\eta_{k-1}(z)S_{\textbf{v}}(z^k)}{\big(q^{-1}-1\big)z}\\ [1.2em]
\quad=\dfrac{u_0}{v_0}\eta_{k-1}\big(q^{-1}z\big)\dfrac{S_{\textbf{v}}\big(q^{-k}z^k\big)-
S_{\textbf{v}}(z^k)}{\big(q^{-k}-1\big)z^k}[k]_{q^{-1}}z^{k-1}+\dfrac{u_0}{v_0}S_{\textbf{v}}(z^k)
\dfrac{\eta_{k-1}\big(q^{-1}z\big)-\eta_{k-1}(z)}{\big(q^{-1}-1\big)z}\\ [1.2em]
\quad=\dfrac{u_0}{v_0}\left(\eta_{k-1}\big(q^{-1}z\big)\big(H_{q^{-k}}S_{\textbf{v}}\big)\big(z^k\big)
[k]_{q^{-1}}z^{k-1}+S_{\textbf{v}}(z^k)\big(H_{q^{-1}}\eta_{k-1}\big)(z)\right)\;.
\end{array}
$$
\end{proof}

\begin{lemma}\cite[Lemma 3.4]{KMP}\label{baseBpoly}
Let $\phi(x)$ be a polynomial and
$\mathcal{B}_k:=\{p_0,p_1,\ldots,p_{k-1}\}$ a finite simple set of
polynomials (i.e., $\deg p_j=j$ for all $j$). Then, to the pair
$\big(\phi,\mathcal{B}_k\big)$ we may associate $k$ polynomials
$\phi_0(x),\phi_1(x),\ldots,\phi_{k-1}(x)$, with %$\varphi_j$ not necessarily of degree $j$,
$\deg\phi_j(x)\leq\lfloor(\deg\phi(x))/k\rfloor$ for all $j=0,1,\ldots,k-1$, such that
\begin{equation}\label{phi-pi-u}
\phi(x)=\sum_{j=0}^{k-1}p_j(x) \sigma_{x^k}[\phi_j]\,.
\end{equation}
\end{lemma}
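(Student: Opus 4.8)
The plan is to exploit the fact that $\mathcal{P}$ is a free module over the subring $\mathbb{C}[x^k]$ of polynomials in $x^k$, with the monomials $1,x,\dots,x^{k-1}$ as a basis. Concretely, grouping the monomials of $\phi$ according to the residue of their exponent modulo $k$, one obtains a unique \emph{base-$k$ expansion}
$$\phi(x)=\sum_{i=0}^{k-1}x^i\,g_i(x^k)\,,\qquad g_i\in\mathbb{C}[t]\,.$$
Since the term $x^i g_i(x^k)$ has degree $i+k\deg g_i$ and only involves monomials whose exponents are $\equiv i\pmod k$, there is no cancellation between distinct values of $i$; writing $n:=\deg\phi$ one gets $i+k\deg g_i\le n$ for each $i$, whence the clean bound $\deg g_i\le\lfloor(n-i)/k\rfloor\le\lfloor n/k\rfloor$.

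First I would pass from the monomial basis to $\mathcal{B}_k$. Because $\deg p_j=j$, the set $\{p_0,\dots,p_{k-1}\}$ is a basis of the $k$-dimensional space of polynomials of degree $\le k-1$, and the transition matrix relative to $1,x,\dots,x^{k-1}$ is triangular with nonzero diagonal, hence invertible over $\mathbb{C}$. Thus there are constants $\alpha_{ij}\in\mathbb{C}$, with $\alpha_{ij}=0$ for $j>i$, such that $x^i=\sum_{j=0}^{i}\alpha_{ij}\,p_j(x)$. Substituting this into the base-$k$ expansion and interchanging the order of summation yields
$$\phi(x)=\sum_{j=0}^{k-1}p_j(x)\Big(\sum_{i=j}^{k-1}\alpha_{ij}\,g_i(x^k)\Big)=\sum_{j=0}^{k-1}p_j(x)\,\sigma_{x^k}[\phi_j]\,,$$
where $\phi_j:=\sum_{i=j}^{k-1}\alpha_{ij}\,g_i$. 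This establishes the decomposition $(\ref{phi-pi-u})$, and the required degree bound is then immediate: each $\phi_j$ is a $\mathbb{C}$-linear combination of the $g_i$, so $\deg\phi_j\le\max_{i\ge j}\deg g_i\le\lfloor n/k\rfloor=\lfloor(\deg\phi)/k\rfloor$.

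I expect the main obstacle to be the degree estimate rather than the mere existence of the $\phi_j$: existence alone is a one-line consequence of $\{p_0,\dots,p_{k-1}\}$ being a $\mathbb{C}[x^k]$-module basis of $\mathcal{P}$. The subtlety is that, unlike the monomials $x^i$, the elements $p_j$ are \emph{not} homogeneous for the residue grading modulo $k$, so the ``no-cancellation'' argument that delivers the clean bound cannot be applied to the $p_j$ directly; routing the computation through the monomial expansion first, and only afterwards performing the constant, degree-non-increasing change of basis to $\mathcal{B}_k$, is what circumvents this. An alternative, entirely self-contained route is induction on $\deg\phi$: writing $n=qk+r$ with $0\le r<k$, one subtracts a suitable scalar multiple of $p_r(x)\,(x^k)^q$ to cancel the leading term of $\phi$, lowering the degree and feeding the induction hypothesis, the factor $(x^k)^q$ contributing exactly the allowed degree $q=\lfloor n/k\rfloor$ to $\phi_r$.
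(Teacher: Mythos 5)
Your proof is correct, and the degree bound is handled properly: the grouping of monomials by exponent residue modulo $k$ gives the clean estimate $\deg g_i\le\lfloor(\deg\phi)/k\rfloor$ with no cancellation, and the passage to $\mathcal{B}_k$ is a constant triangular change of basis, so it cannot increase degrees. Note that the present paper does not prove the lemma at all (it is imported verbatim from \cite[Lemma 3.4]{KMP}), and your argument is essentially the one underlying that source, which rests on the observation that the system $\{p_j(x)\,x^{kn}:\,0\le j\le k-1,\ n\ge0\}$ contains exactly one polynomial of each degree and hence is a basis of $\mathcal{P}$ --- precisely the content of your uniqueness remark and of your alternative induction on $\deg\phi$.
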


\begin{theorem}\label{T1}
Let $\{p_n (x)\}_{n\geq0}$ and $\{q_n(x)\}_{n\geq0}$ be monic OPS satisfying
\begin{equation}\label{TrPol}
p_{nk}(x)=q_n\left(x^k\right)\;,\quad n=0,1,2,\ldots\,.
\end{equation}
Then the following holds:
\begin{itemize}
\item[{\rm (i)}] If $\{p_n(x)\}_{n\geq0}$ is $H_q-$ semiclassical of
class $s$, then $\{q_n(x)\}_{n\geq0}$ is $H_{q^k}-$ semiclassical of
class $\widetilde{s}$, with
$\widetilde{s}\leq\left\lfloor\,s/k\,\right\rfloor$. \item[{\rm
(ii)}] If $\{q_n(x)\}_{n\geq0}$ is $H_{q^k}-$ semiclassical of class
$\widetilde{s}$, then $\{p_n(x)\}_{n\geq0}$ is $H_q-$ semiclassical
of class $s$, with $s\leq (\widetilde{s}+3)k-3$.
\end{itemize}
\end{theorem}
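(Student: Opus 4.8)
The plan is to transfer the structure relation (\ref{StieltSC}) between the two formal Stieltjes series by means of the identities (\ref{Sv}) and (\ref{SuSvq}) of Lemma \ref{Stieltjes-series}, and then to read off the class from the degree formula (\ref{def-classSC-Stieltjes}). Both ${\bf u}$ and ${\bf v}$ are regular (each is an OPS functional), so in each direction only the \emph{structure equation} has to be produced. Throughout I will use that, for a minimal pair $(\Phi,\Psi)$, the associated triple $(A,C,D)$ of (\ref{StieltSC}) satisfies $\deg A=\deg\Phi$ and $\deg C=\max\{\deg\Psi,\deg\Phi-1\}$, and that class $s$ forces $\deg\Phi\le s+2$, $\deg\Psi\le s+1$, hence $\deg A\le s+2$, $\deg C\le s+1$, $\deg D\le s$. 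I will also use the standard fact that \emph{any} valid triple $(A,C,D)$ with $A\not\equiv0$ bounds the class from above: removing the common factor of $A,C,D$ produces a coprime triple with no larger degrees, to which (\ref{def-classSC-Stieltjes}) applies, so $s\le\max\{\deg C-1,\deg D\}$. Finally, write $\eta:=\eta_{k-1}$, which has degree exactly $k-1$, so $\deg(H_{q^{-1}}\eta)=k-2$.

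\emph{Part (i).} Let $(A,C,D)$ be a minimal triple for ${\bf u}$. Solving (\ref{SuSvq}) for $(H_{q^{-1}}S_{\bf u})(z)$, substituting it and the expression (\ref{Sv}) for $S_{\bf u}(z)$ into (\ref{StieltSC}), and clearing the factor $u_0/v_0$, I obtain
\begin{equation*}
\mathcal A(z)\,(H_{q^{-k}}S_{\bf v})(z^k)=\mathcal C(z)\,S_{\bf v}(z^k)+\mathcal D(z),
\end{equation*}
with $\mathcal A(z)=[k]_{q^{-1}}z^{k-1}A(z)\,\eta(q^{-1}z)$, $\mathcal C(z)=C(z)\eta(z)-A(z)(H_{q^{-1}}\eta)(z)$ and $\mathcal D(z)=(v_0/u_0)D(z)$. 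Now split each coefficient according to the residue of its exponents modulo $k$, writing $\mathcal A(z)=\sum_{j=0}^{k-1}z^j\mathcal A^{[j]}(z^k)$ and similarly for $\mathcal C,\mathcal D$. Since $S_{\bf v}(z^k)$ and $(H_{q^{-k}}S_{\bf v})(z^k)$ involve only powers of $z^k$, matching the terms of each residue class and cancelling $z^j$ gives, for every $j$,
\begin{equation*}
\mathcal A^{[j]}(w)\,(H_{q^{-k}}S_{\bf v})(w)=\mathcal C^{[j]}(w)\,S_{\bf v}(w)+\mathcal D^{[j]}(w),\qquad w=z^k.
\end{equation*}
As $\mathcal A\not\equiv0$, some $\mathcal A^{[j_0]}\not\equiv0$, and the corresponding equation is precisely (\ref{StieltSC}) for ${\bf v}$ with $q$ replaced by $q^k$; hence ${\bf v}$ is $H_{q^k}$-semiclassical. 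Each component satisfies $\deg\mathcal C^{[j]}\le\lfloor(\deg\mathcal C)/k\rfloor$ and $\deg\mathcal D^{[j]}\le\lfloor(\deg\mathcal D)/k\rfloor$; using $\deg\mathcal C\le\max\{(s+1)+(k-1),(s+2)+(k-2)\}=s+k$ and $\deg\mathcal D=\deg D\le s$ yields $\widetilde s\le\max\{\lfloor(s+k)/k\rfloor-1,\lfloor s/k\rfloor\}=\lfloor s/k\rfloor$.

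\emph{Part (ii).} Let $(\widetilde A,\widetilde C,\widetilde D)$ be a minimal triple for ${\bf v}$ in the variable $w$, with $\deg\widetilde A\le\widetilde s+2$, $\deg\widetilde C\le\widetilde s+1$, $\deg\widetilde D\le\widetilde s$. Solving (\ref{SuSvq}) for $(H_{q^{-k}}S_{\bf v})(z^k)$, substituting it together with (\ref{Sv}) into $\widetilde A(z^k)(H_{q^{-k}}S_{\bf v})(z^k)=\widetilde C(z^k)S_{\bf v}(z^k)+\widetilde D(z^k)$, and clearing denominators by multiplying through by $\eta(z)$, I reach
\begin{equation*}
\widehat A(z)\,(H_{q^{-1}}S_{\bf u})(z)=\widehat C(z)\,S_{\bf u}(z)+\widehat D(z),
\end{equation*}
where $\widehat A(z)=\eta(z)\widetilde A(z^k)$, $\widehat C(z)=\widetilde A(z^k)(H_{q^{-1}}\eta)(z)+[k]_{q^{-1}}z^{k-1}\eta(q^{-1}z)\widetilde C(z^k)$ and $\widehat D(z)=(u_0/v_0)[k]_{q^{-1}}z^{k-1}\eta(z)\eta(q^{-1}z)\widetilde D(z^k)$. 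Since $\eta\not\equiv0$ and $\widetilde A\not\equiv0$, we have $\widehat A\not\equiv0$, so this is (\ref{StieltSC}) for ${\bf u}$ and $\{p_n\}$ is $H_q$-semiclassical. Counting degrees, $\deg\widehat C\le\max\{k(\widetilde s+2)+(k-2),\,2(k-1)+k(\widetilde s+1)\}=k\widetilde s+3k-2$ and $\deg\widehat D\le3(k-1)+k\widetilde s$, so $s\le\max\{\deg\widehat C-1,\deg\widehat D\}\le k\widetilde s+3k-3=(\widetilde s+3)k-3$.

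The delicate point is the passage from the single identity in $z$ to a genuine structure relation for $S_{\bf v}$ in part (i): one must recognise that, although the ``diagonal'' component $\mathcal A^{[0]}$ could vanish, the nonvanishing of the full polynomial $\mathcal A$ forces some residue component $\mathcal A^{[j_0]}$ to survive, and that any such component already certifies $H_{q^k}$-semiclassicality because the uniform bounds $\deg\mathcal C^{[j]}\le\lfloor(\deg\mathcal C)/k\rfloor$, $\deg\mathcal D^{[j]}\le\lfloor(\deg\mathcal D)/k\rfloor$ hold for \emph{every} $j$. The rest is careful degree bookkeeping: converting the class into the bounds $\deg A\le s+2$, $\deg C\le s+1$, $\deg D\le s$ (and the analogues for $\widetilde A,\widetilde C,\widetilde D$) and tracking how the extra factors $z^{k-1}$, $\eta(z)$, $\eta(q^{-1}z)$ inflate degrees, so as to land exactly on $\lfloor s/k\rfloor$ and $(\widetilde s+3)k-3$. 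A fully distributional derivation of (i) sidesteps the series split: applying $\sigma_{x^k}^*$ to (\ref{EqDistSC1}), using (\ref{re3}) on the left and expanding $\sigma_{x^k}^*(\Psi{\bf u})$ and $\sigma_{x^k}^*(x^{k-1}\Phi{\bf u})$ through (\ref{re1}), (\ref{Mrel1}) and the decomposition (\ref{phi-pi-u}) of Lemma \ref{baseBpoly}, gives directly $H_{q^k}([k]_q\Lambda_0{\bf v})=\Psi_0{\bf v}$, where $\Psi_0$ and $\Lambda_0$ are the $p_0$-components of $\Psi$ and of $x^{k-1}\Phi$; Proposition \ref{PropositionPhiPsi} then applies, and $\deg\Psi_0\le\lfloor\deg\Psi/k\rfloor$, $\deg\Lambda_0\le\lfloor\deg(x^{k-1}\Phi)/k\rfloor$ again reproduce $\widetilde s\le\lfloor s/k\rfloor$.
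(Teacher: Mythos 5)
Your proof is correct, but part (i) follows a genuinely different route from the paper's, while part (ii) coincides with it. For (ii) you do exactly what the paper does: substitute $z\mapsto z^k$ into the minimal Stieltjes relation (\ref{FS1}) for ${\bf v}$, use (\ref{Sv}) and (\ref{SuSvq}) to produce the triple of (\ref{ACD1}) (yours differs only by the harmless constants $v_0$, $u_0$), and your degree counts $k\widetilde s+3k-2$ and $k\widetilde s+3k-3$ are the paper's. For (i), however, the paper does \emph{not} argue on the Stieltjes side at all: it works distributionally, multiplying (\ref{EqDistSC1}) by the carefully chosen power $x^p$ with $p=\ell k-1-s$, $\ell=1+\lfloor s/k\rfloor$, applying $\sigma_{x^k}^*$, decomposing $x^{k+p-1}\Phi$ and $q^{-p}[k]_q^{-1}\big(x^p\Psi+[p]_qx^{p-1}\Phi\big)$ via Lemma \ref{baseBpoly}, and extracting $H_{q^k}(f_0{\bf v})=g_0{\bf v}$ through Lemma \ref{Plemma}. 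The entire purpose of the $x^p$ shift is nontriviality: it places the top term of one of the two decomposed polynomials at a degree that is an exact multiple of $k$, forcing $\deg f_0=\ell+1$ or $\deg g_0=\ell$, so that Proposition \ref{PropositionPhiPsi} applies and the class bound falls out. Your mod-$k$ residue splitting of the transferred Stieltjes identity handles nontriviality more cheaply: $\mathcal A(z)=[k]_{q^{-1}}z^{k-1}A(z)\eta(q^{-1}z)$ is automatically nonzero, so some component $\mathcal A^{[j_0]}$ survives, and the criterion of \cite[Proposition 3.1]{Lotfi} only requires the left-hand polynomial to be nonzero; your bookkeeping ($\deg\mathcal C\le s+k$, $\deg\mathcal D\le s$, componentwise division of degrees by $k$) then lands exactly on $\lfloor s/k\rfloor$, and your reduction-to-coprime step legitimately converts (\ref{def-classSC-Stieltjes}) into the needed upper bound. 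What the paper's route buys instead is the explicit pair $(f_0,g_0)$ for ${\bf v}$ expressed through $(\Phi,\Psi)$, which it re-uses verbatim in the final computations of Section 4 (case (13)), together with the exact equality (\ref{grau-fm-gm}). One caveat: your closing ``fully distributional'' sketch of (i) is \emph{not} complete as stated — without the $x^p$ shift nothing prevents both $p_0$-components $\Lambda_0$ and $\Psi_0$ from vanishing, in which case $H_{q^k}([k]_q\Lambda_0{\bf v})=\Psi_0{\bf v}$ is vacuous and Proposition \ref{PropositionPhiPsi} cannot be invoked; this is precisely the gap the paper's choice of $p$ is engineered to close. Since that sketch is only an aside and your main Stieltjes argument for (i) is self-contained, the proof stands.
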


\begin{proof}
Denote by ${\bf u}$ and ${\bf v}$ the regular linear functionals with respect
to which $\{p_n(x)\}_{n\geq0}$ and $\{q_n(x)\}_{n\geq0}$ are OPS, respectively.

(i) Assume that $\{p_n(x)\}_{n\geq0}$ is $H_q-$ semiclassical of
class $s$.
Then there exist two nonzero polynomials $\Phi(x)$ and $\Psi(x)$, with ${\rm deg}\,\Psi(x)\geq 1$, such that
\begin{equation}\label{EqD}
H_q\left(\Phi\textbf{u}\right)=\Psi\textbf{u}\; ,
\end{equation}
being $s=\max\left\{{\rm deg}\,\Phi -2, {\rm deg}\,\Psi-1\right\}$.
Set $\ell:=1+\lfloor\,s/k\,\rfloor$ and $p:=\ell k-1-s$. Then $p\in\mathbb{N}_0$ and we deduce
\begin{equation}\label{EquD2A}
\sigma_{x^k}^*\left(x^p\,H_q\left(\Phi\textbf{u}\right)\right)
=\left\{\begin{array}{lcl}
[k]_q\,H_{q^k}\left(\sigma_{x^k}^*\left(x^{k-1}\Phi\,\textbf{u}\right)\right)\;,&{\rm
if}&p=0,\\\\
q^p[k]_q\,H_{q^k}\left(\sigma_{x^k}^*\left(x^{k+p-1}\Phi\,\textbf{u}\right)\right)-
[p]_q\sigma_{x^k}^*\left(x^{p-1}\Phi\,\textbf{u}\right)\;,\;&{\rm if}& p\geq 1\;.
\end{array}\right.
\end{equation}
In fact, assume that $p\geq1$.
Then for each $f\in\mathcal{P}$ we have
$$\begin{array}{l}
\left\langle\sigma_{x^k}^*\left(x^p\,H_q\left(\Phi\textbf{u}\right)\right),f\right\rangle=
\left\langle\,H_q\left(\Phi\textbf{u}\right),x^p\,f(x^k)\right\rangle\\[0.5em]
\quad =-\left\langle\,\Phi\textbf{u},H_q\left(x^p\,f(x^k)\right)\right\rangle
=-\left\langle\,\Phi\textbf{u},\dfrac{q^px^pf(q^kx^k)-x^pf(x^k)}{(q-1)x}\right\rangle\\[1.25em]
\quad=-q^p[k]_q\left\langle\,x^{k+p-1}\Phi\textbf{u},\big(H_{q^k}f\big)(x^k)\right\rangle-[p]_q
\left\langle\,x^{p-1}\Phi\textbf{u},f(x^k)\right\rangle\\[1.0em]
\quad=-q^p[k]_q\left\langle\sigma_{x^k}^*\left(x^{k+p-1}\,\Phi\textbf{u}\right),H_{q^k}f\right\rangle-
[p]_q\left\langle\sigma_{x^k}^*\left(x^{p-1}\,\Phi\textbf{u}\right),f\right\rangle\\[1.0em]
\quad=\left\langle
q^p[k]_qH_{q^k}\left(\sigma_{x^k}^*\left(x^{k+p-1}\,\Phi\textbf{u}\right)\right)-
[p]_q\sigma_{x^k}^*\left(x^{p-1}\,\Phi\textbf{u}\right),f\right\rangle\;,
\end{array}$$
which proves (\ref{EquD2A}) for $p\geq1$. The proof is similar for $p=0$.
It follows from (\ref{EqD}) and (\ref{EquD2A}) that
\begin{equation}\label{EquD2}H_{q^k}\left(\sigma_{x^k}^*\left(x^{k+p-1}\Phi\textbf{u}\right)\right)
=\left\{\begin{array}{lcl}
[k]_q^{-1}\sigma_{x^k}^*\left(\Psi\,\textbf{u}\right)\;,&{\rm if}&p=0\\[0.4em]
q^{-p}[k]_q^{-1}\sigma_{x^k}^*\left(\left(x^p\Psi+[p]_qx^{p-1}\Phi\right)\,\textbf{u}\right)
\;,\;&{\rm if}& p\geq 1\;.\end{array}\right.\end{equation}%\end{equation}
Next, Lemma \ref{baseBpoly} ensures the existence of polynomials $f_j(x)$ ($j=0,1,\ldots,k-1$),
with each $f_j(x)$ not necessarily of degree $j$, fulfilling
\begin{equation}\label{Soma1}
x^{k+p-1} \Phi (x)=\sum_{j=0}^{k-1}p_j(x)\sigma_{x^k}[f_j]\;.
\end{equation}
Applying the operator $\sigma_{x^k}^*$ and using Lemma
\ref{Plemma}, we obtain
\begin{equation}\label{EQ2}
\sigma_{x^k}^*\left(x^{k+p-1} \Phi\textbf{u}\right)
=\sum_{j=0}^{k-1}\sigma_{x^k}^*\big(p_j\sigma_{x^k}[f_j]\textbf{u}\big)
=v_0^{-1}u_0 f_0\textbf{v}\; .
\end{equation}
Similarly, consider polynomials $g_j(x)$ ($j=0,1,\ldots,k-1$),
with each $g_j(x)$ not necessarily of degree $j$, such that
\begin{equation}\label{Soma2}
\displaystyle
q^{-p}[k]_q^{-1}\big(x^p\,\Psi(x)+[p]_q\,x^{p-1}\,\Phi(x)\big)=\sum_{j=0}^{k-1}p_j (x)\sigma_{x^k}[g_j]\,,
\end{equation}
and proceed as above to deduce
\begin{equation}\label{EQ3}
\sigma_{x^k}^*\left(q^{-p}[k]_q^{-1}\big(x^p\,\Psi +[p]_q\,x^{p-1}\,\Phi\big)\textbf{u}\right)
=v_0^{-1}u_0 g_0\textbf{v}\;.
\end{equation}
From (\ref{EquD2}), (\ref{EQ2}), and (\ref{EQ3}), we obtain
\begin{equation}\label{EqD2}
H_{q^k}\left(f_0\textbf{v}\right)=g_0\textbf{v}\,.
\end{equation}
Since $s=\max\left\{{\rm deg}\,\Phi -2, {\rm deg}\,\Psi-1
\right\}$, then either ${\rm deg}\,\Phi=s+2$ and ${\rm deg}\,\Psi\leq s+1$ or else ${\rm
deg}\,\Phi<s+2$ and ${\rm deg}\,\Psi=s+1$. In the first case,
the polynomial appearing in the left-hand side of (\ref{Soma1})
has degree $(\ell+1)k$, hence from the right-hand side of
(\ref{Soma1}) we deduce ${\rm deg}\,f_0=\ell+1\geq2$. In the
second case, the polynomial appearing in the left-hand side
of (\ref{Soma2}) has degree $\ell k$, hence ${\rm
deg}\,g_0=\ell\geq1$. We conclude that, in any situation, at least
one of the polynomials $f_0$ or $g_0$ is different from zero.
Thus, since ${\bf v}$ is regular and fulfills (\ref{EqD2}), it
follows from Proposition \ref{PropositionPhiPsi} that $\textbf{v}$
is $H_{q^k}-$semiclassical (being both $f_0$ and $g_0$ different
from zero, and ${\rm deg}\,g_0\geq1$). It remains to prove that
the class $\widetilde{s}$ of $\textbf{v}$ satisfies
$\widetilde{s}\leq\left\lfloor\,s/k\,\right\rfloor$. Notice first
that
$$
k\deg f_0\leq\max_{0\leq j\leq k-1}\left\{j+k\deg f_j\right\}
=\mbox{\rm deg}\,\left\{x^{k+p-1}\,\Phi\,\right\} \leq(\ell+1)k
\;,
$$
where the equality follows from (\ref{Soma1}) and the last inequality
holds since $p=\ell k-1-s$ and $\deg\Phi\leq s+2$, hence $\deg
f_0\leq\ell+1$. In the same way, using (\ref{Soma2}), we deduce
$$
k\deg g_0 \leq
\deg\left\{x^p\,\Psi+[p]_q\,x^{p-1}\,\Phi\right\}\leq\ell k\;,
$$
so $\deg g_0\leq\ell$. But, taking into account the conclusions
of the discussion above involving the two possible
cases (concerning the degrees of $\Phi$ and $\Psi$), at least
one of the equalities $\deg f_0=\ell+1$ or $\deg g_0=\ell$ holds.
Therefore,
\begin{equation}\label{grau-fm-gm}
\max\left\{\deg f_0- 2, \deg g_0 -1\right\}
=\ell-1=\left\lfloor\,s/k\,\right\rfloor\,,
\end{equation}
and so from (\ref{EqD2}) we obtain
$\;\widetilde{s}\leq\max\left\{\deg f_0- 2, \deg g_0 -1\right\}
=\left\lfloor\,s/k\,\right\rfloor\,$.
\medskip

(ii) Assume now that $\{q_n(x)\}_{n\geq0}$ is $H_{q^k}-$semiclassical of
class $\widetilde{s}$. Then the associated formal Stieltjes
series, $S_{\bf v}(z):=-\sum_{n=0}^\infty v_n z^{-n-1}$, satisfies
the (formal) first order linear $q-$difference equation
\begin{equation}\label{FS1}
\widetilde{A}(z)\big(H_{q^{-k}} S_{\textbf{v}}\big)(z)
=\widetilde{C}(z)S_{\textbf{v}}(z)+\widetilde{D}(z)\,,
\end{equation}
with $\widetilde{A}$, $\widetilde{C}$, and $\widetilde{D}$
co-prime polynomials, $\widetilde{A}$ nonzero with $\deg\widetilde{A}\leq \widetilde{s}+2$, and
$\widetilde{s}=\max\{\deg\widetilde{C}-1, \deg\widetilde{D}\}$.
Replacing  in (\ref{FS1}) $z$ by $z^k$ and taking into account
(\ref{SuSvq}) and (\ref{Sv}), we deduce that $S_{\textbf{u}}(z)$ satisfies
$$A(z)\big(H_{q^{-1}}S_{\textbf{u}}\big)(z)=C(z)S_{\textbf{u}}(z)+D(z),$$
where \begin{equation}\label{ACD1}\begin{array}{l}
A(z):=v_0\eta_{k-1}(z)\widetilde{A}(z^k)\; ,\\
[0.4em]
C(z):=v_0\left([k]_{q^{-1}}z^{k-1}\eta_{k-1}(q^{-1}z)\widetilde{C}(z^k)
+\left(H_{q^{-1}}\eta_{k-1}\right)(z)\widetilde{A}(z^k)\right)\; ,\\ [0.4em]
D(z):=u_0[k]_{q^{-1}}z^{k-1}\eta_{k-1}(q^{-1}z)\eta_{k-1}(z)\widetilde{D}(z^k)\;.
\end{array}\end{equation}
Thus \textbf{u} is a $H_q-$semiclassical functional.
Let us prove that the class $s$ of \textbf{u} satisfies
$s\leq(\widetilde{s}+3)k-3$. Indeed, we have
$$
\begin{array}{l}
\deg
C\leq\max\{k-2+k\deg\widetilde{A},2(k-1)+k\deg\widetilde{C}\}\leq
k(\widetilde{s}+3)-2\; , \\ [0.25em] \deg
D=3(k-1)+k\deg\widetilde{D}\leq k(\widetilde{s}+3)-3\,.
\end{array}
$$
Henceforth, $s\leq\max\left\{\deg C- 1, \deg
D\right\}\leq(\widetilde{s}+3)k-3$.
\end{proof}

\begin{corollary}\label{cor-classical}
Let $\{p_n(x)\}_{n\geq0}$ and $\{q_n(x)\}_{n\geq0}$ be two monic OPS satisfying $(\ref{TrPol})$.
If $\{p_n (x)\}_{n\geq0}$ is $H_q-$semiclassical of class $s\leq k-1$,
then $\{q_n(x)\}_{n\geq0}$ is $H_{q^k}-$classical.
\end{corollary}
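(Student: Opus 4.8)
The plan is to invoke Theorem \ref{T1}(i) directly and then simplify the resulting bound on the class; no new computation is needed, since all the analytic work is already packaged in that theorem. First I would observe that the hypothesis is precisely the hypothesis of Theorem \ref{T1}(i): $\{p_n(x)\}_{n\geq0}$ is $H_q$-semiclassical of class $s$. Applying that part of the theorem immediately yields that $\{q_n(x)\}_{n\geq0}$ is $H_{q^k}$-semiclassical of some class $\widetilde{s}$ satisfying $\widetilde{s}\leq\lfloor\,s/k\,\rfloor$.

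Next I would evaluate the floor under the additional assumption $s\leq k-1$. Since the class $s$ is a nonnegative integer with $0\leq s\leq k-1<k$, we have $0\leq s/k<1$, and therefore $\lfloor\,s/k\,\rfloor=0$. Combining this with the bound coming from Theorem \ref{T1}(i) gives $\widetilde{s}\leq 0$.

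Finally, because the class of a $H_{q^k}$-semiclassical functional is by definition a nonnegative integer (see (\ref{def-classSC})), the inequality $\widetilde{s}\leq 0$ forces $\widetilde{s}=0$. By the terminology fixed right after (\ref{def-classSC}), a $H_{q^k}$-semiclassical OPS of class $0$ is exactly what is meant by a $H_{q^k}$-classical OPS, which is the desired conclusion.

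I do not expect any genuine obstacle here: the entire substance of the statement lives in Theorem \ref{T1}(i), and the corollary is the elementary arithmetic observation that the floor function annihilates every integer $s$ in the range $0,1,\ldots,k-1$. The only point deserving a moment's care is confirming that $\widetilde{s}$ is genuinely an integer, so that the mere bound $\widetilde{s}\leq 0$ can be upgraded to the equality $\widetilde{s}=0$; this is immediate from the definition of class in (\ref{def-classSC}).
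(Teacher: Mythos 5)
Your proof is correct and follows exactly the paper's route: the paper's own proof is the one-line remark that the corollary is a straightforward consequence of Theorem \ref{T1}(i), and your argument simply makes explicit the arithmetic that $0\leq s\leq k-1$ forces $\lfloor s/k\rfloor=0$, hence $\widetilde{s}=0$, i.e., the $H_{q^k}$-classical case.
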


\begin{proof}
It is a straightforward consequence of part (i) in Theorem \ref{T1}.
\end{proof}

\begin{corollary}\label{cor-T2}
Let $\{p_n\}_{n\geq0}$ and $\{q_n\}_{n\geq0}$ be monic OPS satisfying $(\ref{TrPol})$, and let
${\bf u}$ and ${\bf v}$ be the corresponding regular functionals, respectively.
If there exist nonzero polynomials $\widetilde{\Phi}$ and $\widetilde{\Psi}$ such that
$D(\widetilde{\Phi}{\bf v})=\widetilde{\Psi}{\bf v}$, with
$\deg\widetilde{\Psi}\geq 1$, then
$$D(\Phi{\bf u})=\Psi{\bf u}\;,$$
where $\Phi$ and $\Psi$ are polynomials given by
$$\begin{array}l
\Phi(x):=q^{1-k}
\eta_{k-1}\big(qx\big)\widetilde{\Phi}\big(x^k\big)\; ,\quad\\ [0.25em]
\Psi(x):=q^{1-k}\left([k]_q
x^{k-1}\,\eta_{k-1}\big(q^{-1}x\big) \widetilde{\Psi}\big(x^k\big)
+\left(\big(H_q\eta_{k-1}\big)(x)+q^{-1}\big(H_{q^{-1}}\eta_{k-1}\big)(x)\right)
\widetilde{\Phi}\big(x^k\big)\right)\,.
\end{array}$$
\end{corollary}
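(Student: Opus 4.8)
The plan is to read the statement through the formal-Stieltjes-series characterization of $H_q$-semiclassical functionals recalled in Section \ref{Prel}, and then to invert the correspondence $(\Phi,\Psi)\mapsto(A,C,D)$ by means of \eqref{phipsi}. Since $\mathbf{v}$ is regular and, by hypothesis, satisfies $H_{q^k}(\widetilde{\Phi}\mathbf{v})=\widetilde{\Psi}\mathbf{v}$ with $\deg\widetilde{\Psi}\geq1$, that characterization (applied with $q$ replaced by $q^k$) shows that $S_{\mathbf{v}}$ obeys a first order $q^{-k}$-difference equation $\widetilde{A}(z)(H_{q^{-k}}S_{\mathbf{v}})(z)=\widetilde{C}(z)S_{\mathbf{v}}(z)+\widetilde{D}(z)$, whose coefficients are the explicit forward images of $\widetilde{\Phi},\widetilde{\Psi}$; in particular $\widetilde{A}(z)=(q^k)^{\deg\widetilde{\Phi}}\widetilde{\Phi}(q^{-k}z)$ and $\widetilde{C}(z)=(q^k)^{\deg\widetilde{\Phi}}\big(q^k\widetilde{\Psi}(z)-(H_{q^{-k}}\widetilde{\Phi})(z)\big)$. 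Feeding these into the relations \eqref{SuSvq} and \eqref{Sv} exactly as in part (ii) of the proof of Theorem \ref{T1} produces the equation $A(z)(H_{q^{-1}}S_{\mathbf{u}})(z)=C(z)S_{\mathbf{u}}(z)+D(z)$ with $A,C,D$ given by \eqref{ACD1}. Thus $\mathbf{u}$ is $H_q$-semiclassical, and the pair $(\Phi,\Psi)$ attached to this equation through \eqref{phipsi} satisfies $H_q(\Phi\mathbf{u})=\Psi\mathbf{u}$; it remains to check that \eqref{phipsi} returns precisely the two polynomials displayed in the statement.

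The computation of $\Phi$ is immediate: from \eqref{ACD1}, $A(z)=v_0\eta_{k-1}(z)\widetilde{A}(z^k)$ has degree $\deg A=(k-1)+k\deg\widetilde{\Phi}$, and applying $h_q$ one finds $A(qz)=v_0\,q^{k\deg\widetilde{\Phi}}\eta_{k-1}(qz)\widetilde{\Phi}(z^k)$, so $\Phi(z)=q^{-\deg A}A(qz)=v_0\,q^{1-k}\eta_{k-1}(qz)\widetilde{\Phi}(z^k)$. For $\Psi$ I would expand $\Psi(z)=q^{-\deg A}\{(H_qA)(z)+q^{-1}C(z)\}$ using the $q$-Leibniz rule $H_q(fg)(x)=f(qx)(H_qg)(x)+g(x)(H_qf)(x)$, together with \eqref{re2} (which rewrites $H_q$ of $\widetilde{A}(\cdot^k)$ through $H_{q^k}\widetilde{A}$) and the auxiliary identity $(H_{q^k}\widetilde{A})(w)=(q^k)^{\deg\widetilde{\Phi}-1}(H_{q^{-k}}\widetilde{\Phi})(w)$. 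After writing $\widetilde{A}(z^k)=(q^k)^{\deg\widetilde{\Phi}}\big[\widetilde{\Phi}(z^k)+(q^{-k}-1)z^k(H_{q^{-k}}\widetilde{\Phi})(z^k)\big]$, the resulting terms split into three blocks according to the factors $\widetilde{\Psi}(z^k)$, $\widetilde{\Phi}(z^k)$, and $(H_{q^{-k}}\widetilde{\Phi})(z^k)$. Using $[k]_{q^{-1}}=q^{1-k}[k]_q$, the coefficient of $\widetilde{\Psi}(z^k)$ collapses to $v_0\,q^{1-k}[k]_q\,z^{k-1}\eta_{k-1}(q^{-1}z)$, and the coefficient of $\widetilde{\Phi}(z^k)$ collapses to $v_0\,q^{1-k}\big((H_q\eta_{k-1})(z)+q^{-1}(H_{q^{-1}}\eta_{k-1})(z)\big)$; these are exactly the two polynomials in the statement, up to the harmless overall factor $v_0$, which may be absorbed because \eqref{EqDistSC1} is homogeneous in $(\Phi,\Psi)$.

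The heart of the argument, and the only step I expect to present genuine difficulty, is showing that the block carrying $(H_{q^{-k}}\widetilde{\Phi})(z^k)$ vanishes, since the claimed $\Psi$ contains no such contribution. Collecting these terms, the total coefficient of $v_0\,(H_{q^{-k}}\widetilde{\Phi})(z^k)$ is
$$
q^{1-2k}[k]_q\,z^{k-1}\big(\eta_{k-1}(qz)-\eta_{k-1}(q^{-1}z)\big)+(q^{-k}-1)z^k\big(q^{1-k}(H_q\eta_{k-1})(z)+q^{-k}(H_{q^{-1}}\eta_{k-1})(z)\big)\,.
$$
Rewriting $z(H_q\eta_{k-1})(z)=(\eta_{k-1}(qz)-\eta_{k-1}(z))/(q-1)$ and $z(H_{q^{-1}}\eta_{k-1})(z)=q(\eta_{k-1}(z)-\eta_{k-1}(q^{-1}z))/(q-1)$, the $\eta$-differences telescope to the common factor $\eta_{k-1}(qz)-\eta_{k-1}(q^{-1}z)$, and the whole expression reduces to
$$
\frac{z^{k-1}}{q-1}\big(\eta_{k-1}(qz)-\eta_{k-1}(q^{-1}z)\big)\big(q^{1-2k}(q^k-1)+q^{1-k}(q^{-k}-1)\big)=0\,,
$$
since $q^{1-2k}(q^k-1)+q^{1-k}(q^{-k}-1)=q^{1-k}-q^{1-2k}+q^{1-2k}-q^{1-k}=0$. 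With this cancellation the identification of $\Psi$ is complete. One final point worth recording is that invoking \eqref{phipsi} here does not require $A,C,D$ to be coprime: \eqref{phipsi} is merely the algebraic inverse of the forward map, and the characterization of Section \ref{Prel} guarantees that the pair it returns solves \eqref{EqDistSC1}; coprimality would be needed only if one also wished to read off the exact class of $\mathbf{u}$, which is not claimed here.
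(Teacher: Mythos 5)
Your proof is correct and takes essentially the same route as the paper's: the paper's own one-line proof of Corollary \ref{cor-T2} likewise combines \eqref{phipsi} with \eqref{ACD1}, using exactly the identities you invoke ($[k]_{q^{-1}}=q^{1-k}[k]_q$ together with the $q$-Leibniz expansions of $H_qA$ and of $C$). The explicit cancellation of the $\big(H_{q^{-k}}\widetilde{\Phi}\big)(z^k)$ block that you verify, and your observation that \eqref{phipsi} serves as an algebraic inverse without any coprimality assumption on $A$, $C$, $D$, are precisely the details left implicit in the paper's ``follows immediately.''
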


\begin{proof} The statement follows immediately from
(\ref{phipsi}) and (\ref{ACD1}) in the proof of Theorem \ref{T1},
and taking into account the following relations:
$$
\begin{array}l
[k]_{q^{-1}}=[k]_qq^{1-k}\;,\\ [0.25em]
v_0^{-1}\big(H_qA\big)(x)=\eta_{k-1}(qx)\big(H_{q^k}\widetilde{A}\big)(x^k)[k]_qx^{k-1}+\widetilde{A}
\big(x^k\big)\big(H_{q}\eta_{k-1}\big)(x)\;,\\ [0.25em]
(qv_0)^{-1}C(x)=\eta_{k-1}(q^{-1}x)q^{-k}\widetilde{C}(x^k)[k]_qx^{k-1}+q^{-1}\widetilde{A}
\big(x^k\big)\big(H_{q^{-1}}\eta_{k-1}\big)(x)\;.\\
\end{array}
$$

\end{proof}

\section{Examples}\label{poly-cubic}

In this section we give examples of $H_q-$semiclassical OPS of classes $1$ and $2$ obtained via cubic transformations.
In particular, we confirm the results contained in the recent work \cite{TounsiRaddaoui}
where the authors considered the problem of determining all the $H_q-$semiclassical monic OPS of
class $1$, $\{p_n(x)\}_{n\geq0}$, such that the cubic decomposition
\begin{equation}\label{Prob-cubic}
p_{3n}(x)=q_n(x^3)\;, \quad n=0,1,2,\ldots
\end{equation}
holds, being $\{q_n(x)\}_{n\geq0}$ a monic OPS.
In \cite[Theorem 4.2]{TounsiRaddaoui} it was stated that property (\ref{Prob-cubic}) is fulfilled only if
$\{q_n(x)\}_{n\geq0}$ coincides with some specific family of
$H_{q^3}-$classical OPS (which have been determined explicitly, all the
possible families being special cases of Little $q^3-$Laguerre
and Little $q^3-$Jacobi polynomials, up to affine
changes of the variable). It is clear from Corollary
\ref{cor-classical} that, indeed, only $H_{q^3}-$classical OPS
$\{q_n(x)\}_{n\geq0}$ may appear as solutions of such a problem.
Moreover, we see immediately that considering the analog
problem demanding $\{p_n(x)\}_{n\geq0}$ to be $H_q-$semiclassical of
class $2$, then again only $H_{q^3}-$classical OPS
$\{q_n(x)\}_{n\geq0}$ may appear fulfilling such cubic
transformation. Thus, in the next we present several examples
involving as $\{q_n(x)\}_{n\geq0}$ the Little $q^3-$Laguerre or the Little $q^3-$Jacobi polynomials
and, in each case, we give the corresponding families $\{p_n(x)\}_{n\geq0}$
which are semiclassical of class $1$. Thus we recover the families presented in \cite{TounsiRaddaoui})
or of class $2$ by giving new examples.

\subsection{Description of the semiclassical families $\{p_n\}_{n\geq0}$ of class $s\leq2$}

We start by making the assumption that $\{p_n(x)\}_{n\geq0}$ is $H_q-$semiclassical of class $s\leq2$,
and noticing that (\ref{Prob-cubic}) corresponds to a polynomial mapping such that $k=3$ and $m=0$,
being $\pi_3(x)=x^3$ and $\theta_0\equiv1$.
Thus, by Corollary \ref{cor-classical}, $\{q_n(x)\}_{n\geq0}$ is a $H_{q^3}-$classical monic OPS.
We assume that $\{q_n(x)\}_{n\geq0}$ is (up to an affine change of variables)
one of the  families of the Little $q^3-$Laguerre or Little $q^3-$Jacobi polynomials,
described in Table \ref{Table1}.
We analyze these two cases separately.
Before performing this analysis, notice that
according to the expression of $\eta_{k-m-1}\equiv\eta_2$ given in Theorem \ref{teobk1p2}, one has
\begin{equation}
\label{eta2}
\eta_2(x)=\Delta_0(2,2)=x^2-\big(b_0^{(1)}+b_0^{(2)}\big)x+b_0^{(1)}b_0^{(2)}-a_0^{(2)}\;.
\end{equation}
On the other hand, by (\ref{Pimka}),
\begin{equation}
\label{pi3new}
x^3=\pi_3(x)=\big(x-b_0^{(0)}\big)\,\eta_{2}(x)-a_0^{(1)}\,\big(x-b_0^{(2)}\big)+r_0\;.
\end{equation}
Therefore, setting
\begin{equation}
\label{ktauAA}
\tau:=b_0^{(0)}\; ,\quad k_\tau:=a_0^{(1)}+\tau^2\; ,
\end{equation}
using (\ref{eta2}) and (\ref{pi3new}), we may write
\begin{equation}\label{polinomios}
\eta_2(x)=x^2+\tau x+k_\tau=p_2^{(1)}(x)\; .
\end{equation}
From now on we assume that $\{q_n(x)\}_{n\geq0}$ is $H_{q^3}-$classical and coincides with
one of the  families of the Little $q^3-$Laguerre or Little $q^3-$Jacobi polynomials.
Since $\{p_n(x)\}_{n\geq0}$ fulfils the cubic decomposition
(\ref{Prob-cubic}), it follows from the proof of Theorem \ref{T1}
that the formal Stieltjes series $S_{\bf u}(z)$ satisfies
$$A(z)\big(H_{q^{-1}}S_{\textbf{u}}\big)(z)=C(z)S_{\textbf{u}}(z)+D(z),$$
with \begin{equation}\label{ACD}\begin{array}{l}
A(z):=v_0\eta_{2}(z)\widetilde{A}(z^3)\; ,\\
[0.4em]
C(z):=v_0\left([3]_{q^{-1}}z^{2}\eta_{2}(q^{-1}z)\widetilde{C}(z^3)+\widetilde{A}(z^3)
\left(H_{q^{-1}}\eta_{2}\right)(z)\right)\; ,\\ [0.4em]
D(z):=u_0[3]_{q^{-1}}z^{2}\eta_{2}(q^{-1}z)\eta_{2}(z)\widetilde{D}(z^3)\;,
\end{array}\end{equation}
where the polynomials $\widetilde{A}(x)$, $\widetilde{C}(x)$, and $\widetilde{D}(x)$
are the polynomials $A$, $C$, and $D$ appearing in Table 2 with $q$ replaced by $q^3$.

Tables \ref{Table3} and \ref{Table4} describe all the possible monic OPS $\{p_n(x)\}_{n\geq0}$
semiclassical of classes $1$ or $2$ such that (\ref{Prob-cubic}) holds,
by giving the polynomials $\Phi$ and $\Psi$ appearing in the $q-$difference equation for
each associated functional ${\bf u}$, assuming that $\{q_n(x)\}_{n\geq0}$
is $H_{q^3}-$classical and coincides with one of the  families of the Little $q^3-$Laguerre
or Little $q^3-$Jacobi polynomials.
As we see on these tables, there are 13 possible cases.
The three cases corresponding to class $s=1$ are the ones that have been obtained
in \cite[Theorem 4.2]{TounsiRaddaoui}, up to affine change of the variables. For class $s=2$
the examples given in cases (4)--(13) are new.
Next we only give the details for obtaining the results in cases (1) and (4).
The procedure for the remaining cases is similar.

Let $\{q_n(x)\}_{n\geq0}$ be the Little $q^3-$Laguerre monic OPS, i.e.,
$q_n(x)\equiv L_n(x;a|q^3)$, for an arbitrary parameter $a$.
Write $\eta_2(z)=(z-z_1)(z-z_2)$.
Assume that $z_1\neq z_2$.
By (\ref{ACD}), $v_0z^2$ is a common factor of $A$, $C$, and $D$.
The division of these three polynomials by this factor gives
(for simplicity, we still use $A$, $C$, and $D$,
although in fact these are the polynomials obtained by dividing the above ones by the common factor)
\begin{equation}\label{FCDExemplo2}\begin{array}{l}
A(z):=z(z-z_1)(z-z_2)\; ,\\ [0.4em]
C(z):=[3]_{q^{-1}}\big(q^{-1}z-z_1\big)\big(q^{-1}z-z_2\big)\big(\ell\big(z^3-1+aq^3\big)-q^3\big)+\\ [0.4em]
\qquad\qquad \qquad\qquad \qquad +z\big(\big(q^{-1}+1\big)z-z_1-z_2\big)
\; ,\\ [0.4em]
D(z):=u_0[3]_{q^{-1}}\ell\big(q^{-1}z-z_1\big)\big(q^{-1}z-z_2\big)(z-z_1)(z-z_2)\;,
\end{array}
\end{equation}
where $\ell:=a^{-1}\big(q^3-1\big)^{-1}q^3$.
Now, we see that $C(0)=D(0)=0$ if and only if $z_1z_2=0$.
If $z_1=0$ (the reasoning hereafter is similar if $z_2=0$) then from (\ref{polinomios})
we obtain $z_2=-\tau\neq0$ (since we are assuming $z_1\neq z_2$).
Under such conditions, $z$ is a common factor of the polynomials $A$, $C$, and $D$ given by (\ref{FCDExemplo2}),
hence the division of these polynomials by $z$ yields
\begin{equation}\label{FCDExemplo3}\begin{array}{l}
A(z):=z(z+\tau)\; ,\\ [0.4em]
C(z):= q^{-1}(q-1)^{-1}a^{-1}\big(z^4+\tau q z^3+\big(aq^2-1\big)z+\tau q\big(a q-1\big)\big)\; ,\\ [0.5em]
D(z):=u_0q^{-1}(q-1)^{-1}a^{-1} z(z+\tau q) (z+\tau)\;.
\end{array}
\end{equation}
Now, for these polynomials (\ref{FCDExemplo3}), we have $C(0)=D(0)=0$ if and only if $a=q^{-1}$.
Under such conditions $z$ is a common factor of the polynomials $A$, $C$, and $D$ given by (\ref{FCDExemplo3}),
and so dividing these polynomials by $z$ we obtain
\begin{equation}\label{FCDExemplo4}\begin{array}{l}
A(z):=z+\tau\; ,\\ [0.4em]
C(z):=  (q-1)^{-1}\big(z^3+\tau qz^2+q-1\big)\; ,\\ [0.5em]
D(z):= u_0(q-1)^{-1}(z+\tau q)(z+\tau)\;.
\end{array}
\end{equation}
Then $C(-\tau)=D(-\tau)=0$ if and only if $\tau^3=-1$. Under such conditions,
$z+\tau$ is a common factor of $A$, $C$, and $D$ given by (\ref{FCDExemplo4}).
Hence, dividing by $z+\tau,$ we obtain
\begin{equation}\label{FCDExemplo5}\begin{array}{l}
A(z):=1\; ,\\ [0.4em]
C(z):=(q-1)^{-1}\big(z^2-\tau(1-q)z+\tau^2(1-q)\big)\; ,\\ [0.5em]
D(z):=u_0(q-1)^{-1}(z+\tau q)\;.
\end{array}
\end{equation}
We may conclude that if $z_1=0$, $z_2=-\tau\neq0$, $a=q^{-1}$, and $\tau^3=-1$,
then {\textbf{u}} is semiclassical of class $s=1$. Moreover, under these conditions,
from (\ref{phipsi}) we obtain
$$\Phi(x)=1\; ,\quad \Psi(x)=q^{-1}\big((q-1)^{-1}x^2+\tau x-\tau^2\big)\,.$$
This gives case (1) appearing in Table \ref{Table3} and recovers the first
solution presented in \cite[Theorem 4.2]{TounsiRaddaoui}.
If $z_1=0$, $z_2=-\tau\neq0$, $a=q^{-1}$, and $\tau^3\neq -1$,
then it is clear from (\ref{FCDExemplo4}) that {\textbf{u}} is semiclassical of
class $s=2$, and from (\ref{phipsi}) we deduce
$$\Phi(x)=x+\tau q^{-1}\; ,\quad \Psi(x)=q^{-2}(q-1)^{-1}\big(x^3+\tau q x^2+ q^2-1\big)\,.$$
This gives case (4) appearing in Table \ref{Table3}.
We also note that if $z_1=z_2$, then using a similar reasoning we can show that
the class of {\textbf{u}} is greater than two.

%\begin{landscape}
{\scriptsize
\centering
\begin{table}
\hspace*{-3em}
\begin{tabular}{|>{\columncolor[gray]{0.95}}c|c|c|c|c|c|}
\hline \rowcolor[gray]{0.95}
\rule{0pt}{1.2em} Case & $s$ & $q_n(x)$ &  $b_0^{(0)}$ & $a_0^{(1)}$ & Constraints  \\ [0.2em]
\hline
\rule{0pt}{1.2em} (1) & $1$ & $L_n(x;a|q^3)$ &  $\tau$ & $-\tau^2$ & $a=q^{-1}$ , $\tau^3=-1$  \\
%\hline
\rule{0pt}{1.2em} (2) && $U_n(x;a,b|q^3)$ & $\tau$ & $-\tau^2$ &   $a=q^{-1}$ , $\tau^3=-1$  \\
%\hline
\rule{0pt}{1.2em} (3) && $U_n(x;a,b|q^3)$ &  $\tau$ & $-\tau^2$ &
$a=q^{-1}$ , $\tau^3\neq-1$, $b=-\tau^{-3}q^{-3}$, $\tau\neq 0$   \\ [0.2em]
\hline
\rule{0pt}{1.2em} (4) & $2$ & $L_n(x;a|q^3)$  &  $\tau$ & $-\tau^2$ &  $a=q^{-1}\;,\;\tau\neq0\;,\;\tau^3\neq-1$  \\
%\hline
\rule{0pt}{1.2em} (5) && $L_n(x;a|q^3)$ &  $\tau$ & $-\tau^2$ &  $a\neq q^{-1}\;,\;\tau^3=-1$ \\
%\hline
\rule{0pt}{1.2em} (6) && $L_n(x;a|q^3)$ &  $\tau$ & $-\tau^2[3]_q/(1+q)^2$ &  $\tau^3=-(1+q)^3$ \\
%\hline
\rule{0pt}{1.2em} (7) && $U_n(x;a,b|q^3)$ & $\tau$ & $-\tau^2$ &  $a\neq q^{-1}$ , $\tau^3=-1$ \\
%\hline
\rule{0pt}{1.2em} (8) && $U_n(x;a,b|q^3)$ & $\tau$ & $-\tau^2$ &
$a=q^{-1}$ , $\tau^3\neq-1$, $b\neq-\tau^{-3}q^{-3}$, $\tau\neq 0$  \\
%\hline
\rule{0pt}{1.2em} (9) && $U_n(x;a,b|q^3)$ & $\tau$ & $-\tau^2$ &
$a\neq q^{-1}$ , $\tau^3\neq-1$, $b=-\tau^{-3}q^{-3}$, $\tau\neq 0$ \\
%\hline
\rule{0pt}{1.2em} (10) && $U_n(x;a,b|q^3)$ & $\tau$ & $-\tau^2[3]_q/(1+q)^2$ &  $\tau^3=-(1+q)^3$ \\
%\hline
\rule{0pt}{1.2em} (11) && $U_n(x;a,b|q^3)$ & $\tau$ & $-\tau^2[3]_q/(1+q)^2$ &  $\tau^3=-q^{-3}(1+q)^3$, $b=1$ \\
%\hline
\rule{0pt}{1.2em} (12) && $U_n(x;a,b|q^3)$ & $\tau$ & $-\tau^2[3]_q/(1+q)^2$ &
$\tau\neq0$, $\tau^3\neq-(1+q)^3$, $b=-(1+q)^3\tau^{-3}q^{-6}$ \\
[0.2em]
%\hline
\rule{0pt}{1.2em} (13) && $U_n(x;a,b|q^3)$ & $\tau$ & $-\big(c^2+\tau c+\tau^2\big)$ &  $c\neq0$, $c\neq -\tau q/(1+q)$, $c^2+\tau c+\tau^2\neq 0$,\\
%\hline
\rule{0pt}{1.1em}
 &&  &  &  & $(\tau+c)^3=-1$, $b=c^{-3}q^{-3}$ \\
%\hline
\hline
\end{tabular}\medskip
\caption{\small Description of all possible $H_q-$semiclassical OPS
$\{p_n(x)\}_{n\geq0}$ of class $s\leq2$ obtained via a cubic transformation
such that $p_{3n}(x)=q_n(x^3)$ for all $n\geq0$, being $\{q_n(x)\}_{n\geq0}$ either the
sequence of little $q^3-$Laguerre polynomials $\{L_n(x;a|q^3)\}_{n\geq0}$, or the sequence of little $q^3-$Jacobi
polynomials $\{U_n(x;a,b|q^3)\}_{n\geq0}$. This polynomial mapping depends on the choice
of the parameters $a$, $b$, $c$ and $\tau$, which may be chosen
arbitrarily in $\mathbb{C}$ subject to the given constraints.}\label{Table3}
\end{table}}
%\end{landscape}

{\begin{landscape}
\scriptsize
\centering
\begin{table}
\hspace*{-3em}
\begin{tabular}{|>{\columncolor[gray]{0.95}}c|c|c|}
\hline \rowcolor[gray]{0.95}
\rule{0pt}{1.2em} Case & $\Phi$ & $\Psi$ \\ [0.2em]
\hline
\rule{0pt}{1.2em} (1) &  $1$ & $q^{-1}\big((q-1)^{-1}x^2+\tau x-\tau^2\big)$ \\ [0.5em]
%\hline
\rule{0pt}{1.2em} (2) & $x^3-q^{-3}b^{-1}$ & $q^{-4}b^{-1}\big((q-1)^{-1}\big(q^4b-1\big)x^2-\tau x+\tau^2\big)$ \\ [0.5em]
%\hline
\rule{0pt}{1.2em} (3) & $x^3+q^{-1}(1-q)\tau x^2-q^{-1}(1-q)\tau^2x+q^{-1}\tau^3 $& $(q-1)^{-1}\big(1-q^{-4}b^{-1}\big)x^2-q^{-1}\tau x+q^{-1}\tau^2$ \\ [0.5em]
\hline
\rule{0pt}{1.2em} (4) & $x+\tau q^{-1}$ & $q^{-2}(q-1)^{-1}\big(x^3+\tau q x^2+q^2-1\big)$ \\ [0.5em]
%\hline
\rule{0pt}{1.2em} (5) & $x$ & $q^{-3}a^{-1}\big((q-1)^{-1}x^3+\tau x^2-\tau^2 x+q(q-1)^{-1}(aq^2-1)\big)$ \\ [0.5em]
%\hline
\rule{0pt}{1.2em} (6) & $x$ & $q^{-3}a^{-1}\big((q-1)^{-1}x^3+\tau x^2-\tau^2(1+q)^{-1}x+q^2(q-1)^{-1}(aq-1)\big)$ \\ [0.5em]
%\hline
\rule{0pt}{1.2em} (7) & $x^4-q^{-3}b^{-1}x$ & $q^{-6}(q-1)^{-1}b^{-1}a^{-1}
\big(\big(ab q^6-1\big)x^3+\tau(1-q)x^2+\tau^2(q-1)x+q-aq^3\big)$ \\ [0.5em]
%\hline
\rule{0pt}{1.2em} (8) & $x^4+q^{-1}\tau x^3-q^{-3}b^{-1}x-q^{-4}b^{-1}\tau$ & $q^{-5}(q-1)^{-1}b^{-1}
\big(\big(q^5b-1\big)x^3+\tau q\big(b q^3-1\big)x^2+\big(1-q^2\big)\big)$ \\ [0.5em]
%\hline
\rule{0pt}{1.2em} (9) &
$x^4+q^{-1}\tau(1-q)x^3+\tau^2q^{-1}(q-1)x^2+\tau^3q^{-1}x$ &
$q^{-6}(q-1)^{-1}b^{-1}a^{-1}
\big(\big(ab q^6-1\big)x^3+abq^5\tau(1-q)x^2+abq^5\tau^2(q-1)x$\\
&&$+abq^4\tau^3(q-1)+1-aq\big)$  \\ [0.5em]
%\hline

\rule{0pt}{1.2em} (10) & $x^4-q^{-3}b^{-1}x$ & $a^{-1}b^{-1}q^{-6}\big((q-1)^{-1}(abq^6-1)x^3-\tau x^2+(q+1)^{-1}\tau^2 x+q^2(q-1)^{-1}(1-aq)\big)$ \\ [0.5em]
\rule{0pt}{1.2em} (11) & $x^4+\tau q^{-1}(q+1)^{-1}(1-q)x^3+\tau^2 q^{-1}(q+1)^{-2}(q-1)x^2$&$a^{-1}q^{-4}\big(q^{-2}(q-1)^{-1}\big(aq^6-1\big)x^3-\tau q^{-1}(q+1)^{-1}\big(aq^4+1\big)x^2+\tau^2(q+1)^{-2}\big(aq^3+1\big)x$\\
&$+\tau^3 q^{-1}(q+1)^{-3}x$ &$+(q+1)^{-3}(q-1)^{-1}q^{-1}\big(aq^3\tau^3(q-1)
+(q+1)^3(1-a)\big)\big)$
\\ [0.5em]
%\hline
\rule{0pt}{1.2em} (12) & $x^4+\tau q^{-1}(1-q)x^3+\tau^2(q+1)^{-1}(q-1)x^2+q\tau^3(q+1)^{-3}x$ & $a^{-1}(q-1)^{-1}(q+1)^{-3}\big(\tau^3+a\big(q^3+1\big)+3aq(q+1)\big)x^3-\tau q^{-1} x^2$\\
&&$+\tau^2(q+1)^{-1}x+(q-1)^{-1}a^{-1}(q+1)^{-3}\tau^3(aq-1)$ \\ [0.5em]
%\hline
\rule{0pt}{1.2em} (13) & $x^4+cq^{-1}(q-1)x^3+c^2q^{-1}(q-1)x^2-c^3q^{-1}x$ & $a^{-1}b^{-1}q^{-6}\big((q-1)^{-1}(abq^6-1)x^3+\big(c\big(abq^5-1\big)-\tau\big)x^2$\\
&&$+\big(c^2\big(abq^5+1\big)+\tau(\tau+2c)\big)x-(q-1)^{-1}\big(c^3abq^4(q-1)+q(a-1)\big)$ \\ [0.5em]
\hline
\end{tabular}\medskip
\caption{\small The polynomials $\Phi$ and $\Psi$ appearing in the canonical
distributional $q-$difference equation $H_q(\Phi{\bf u})=\Psi{\bf u}$ satisfied
by the functional ${\bf u}$ with respect to which $\{p_n(x)\}_{n\geq0}$ is an OPS,
in accordance with each case described in Table \ref{Table3}.}\label{Table4}
\end{table}
\end{landscape}}

\subsection{Discrete measure representation}
Next, we provide a discrete measure representation for the functional ${\bf u}$ with respect to which $\{p_n(x)\}_{n\geq0}$
is a monic OPS (given by Tables 3 and 4, when $0<q<1$). These representations may be obtained using the following proposition, which
generalizes \cite[Lemma 4.3]{TounsiRaddaoui}.

\begin{lemma}\label{lemadiscret}
Let  $\{p_n(x)\}_{n\geq0}$ and $\{q_n(x)\}_{n\geq0}$ be monic OPS satisfying (\ref{Prob-cubic}), and let \emph{$\textbf{u}$} and \emph{$\textbf{v}$} be the corresponding regular linear functionals in $\mathcal{P}^*$ (respectively). Assume further that \emph{$\textbf{v}$} has a discrete measure representation
\begin{equation}\label{discretev1}
\emph{\textbf{v}}=\sum_{\ell=0}^{+\infty}a_{\ell}\delta_{ \mu_{\ell}^3}\;,
\end{equation}
%where $\{a_\ell\}_{\ell\geq0}$ is a sequence of nonnegative real numbers and $\{\mu_\ell\}_{\ell\geq0}$
%a sequence of real numbers subject to the condition
being $\{a_\ell\}_{\ell\geq0}$ and $\{\mu_\ell\}_{\ell\geq0}$
sequences of complex numbers, with $\mu_\ell\neq0$ for each $\ell=0,1,2,\ldots$, such that
\begin{equation}\label{discretev2}
\left|\sum_{\ell=0}^{+\infty}a_{\ell} \mu_{\ell}^{n-2}\right|<+\infty\;,\quad\forall n\in N_\tau\;,
%\quad n=0,1,2,\ldots\;.
\end{equation}
where $N_\tau:=\mathbb{N}$ if $k_\tau=0$, and $N_\tau:=\mathbb{N}_0$ if $k_\tau\neq0$,
and $k_\tau$ is defined by (\ref{ktauAA}).
Then \emph{$\textbf{u}$} has the discrete measure representation
\begin{equation}\label{discreteu}
\emph{\textbf{u}}=\frac{u_0}{v_0}\,\sum_{\ell=0}^{+\infty}\frac{a_{\ell}}{3 \mu_{\ell}^2}
\sum_{p=0}^2j^p\eta_2\left(j^p \mu_{\ell}\right)\delta_{j^p \mu_{\ell}}\,,\end{equation}
where $j=e^{2\pi i/3}$ and $\eta_2$ is the polynomial given by (\ref{polinomios}).
\end{lemma}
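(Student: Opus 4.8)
The plan is to denote the right-hand side of (\ref{discreteu}) by $\widehat{\mathbf{u}}$ and to prove that $\widehat{\mathbf{u}}$ and $\mathbf{u}$ have the same moments; since a functional on $\mathcal{P}$ is determined by its moments, this yields $\widehat{\mathbf{u}}=\mathbf{u}$. First I would record the moment relations between $\mathbf{u}$ and $\mathbf{v}$. Expanding the identity $S_{\mathbf{u}}(z)=\frac{u_0}{v_0}\eta_2(z)S_{\mathbf{v}}(z^3)$ from (\ref{Sv}), with $\eta_2(z)=z^2+\tau z+k_\tau$ as in (\ref{polinomios}) and $S_{\mathbf{v}}(z^3)=-\sum_{n\geq0}v_n z^{-3n-3}$, and reading off the coefficient of $z^{-m-1}$ for each residue of $m$ modulo $3$, yields
\[
u_{3n}=\frac{u_0}{v_0}\,v_n,\qquad u_{3n+1}=\frac{u_0}{v_0}\,\tau\,v_n,\qquad u_{3n+2}=\frac{u_0}{v_0}\,k_\tau\,v_n\qquad(n\geq0).
\]
These also follow directly from (\ref{Mrel1}) by writing $1,x,x^2$ in the basis $\{p_0,p_1,p_2\}$ and applying $\sigma_{x^3}^*$; the constant coefficient of $x^2$ in that basis equals $a_0^{(1)}+\tau^2=k_\tau$, in accordance with (\ref{ktauAA}).

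Next I would compute the moments of $\widehat{\mathbf{u}}$ directly. Using $\eta_2(j^p\mu_\ell)=j^{2p}\mu_\ell^2+\tau j^p\mu_\ell+k_\tau$ and $(j^p\mu_\ell)^m=j^{pm}\mu_\ell^m$, and summing first over $p$, one gets
\[
\langle\widehat{\mathbf{u}},x^m\rangle
=\frac{u_0}{v_0}\sum_{\ell\geq0}\frac{a_\ell}{3\mu_\ell^{2}}
\left(\mu_\ell^{m+2}\sum_{p=0}^2 j^{p(m+3)}
+\tau\,\mu_\ell^{m+1}\sum_{p=0}^2 j^{p(m+2)}
+k_\tau\,\mu_\ell^{m}\sum_{p=0}^2 j^{p(m+1)}\right).
\]
The key algebraic step is the roots-of-unity filter $\sum_{p=0}^2 j^{pr}=3$ when $r\equiv0\ (\mathrm{mod}\ 3)$ and $=0$ otherwise: for $m\equiv0$ only the first inner sum survives and the bracket collapses to $3\mu_\ell^{m+2}$; for $m\equiv1$ only the second, giving $3\tau\mu_\ell^{m+1}$; and for $m\equiv2$ only the third, giving $3k_\tau\mu_\ell^{m}$. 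Dividing by $3\mu_\ell^2$ and summing over $\ell$ then produces $\frac{u_0}{v_0}v_n$, $\frac{u_0}{v_0}\tau v_n$, $\frac{u_0}{v_0}k_\tau v_n$ respectively, with $n=\lfloor m/3\rfloor$ and $v_n=\sum_{\ell\geq0}a_\ell\mu_\ell^{3n}$. This matches the moment list above, so $\langle\widehat{\mathbf{u}},x^m\rangle=u_m$ for every $m\geq0$, and hence $\widehat{\mathbf{u}}=\mathbf{u}$.

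The step that needs genuine care --- and the sole reason for hypothesis (\ref{discretev2}) together with its case distinction on $k_\tau$ --- is the verification that $\widehat{\mathbf{u}}$ is a bona fide element of $\mathcal{P}^*$ (all moments finite) and that the filtering computation, which interchanges $\sum_\ell$ with the finite sum over $p$, is legitimate. Writing $c_{\ell,p}$ for the coefficient of $\delta_{j^p\mu_\ell}$ in (\ref{discreteu}) and bounding $|\eta_2(j^p\mu_\ell)|\leq|\mu_\ell|^2+|\tau|\,|\mu_\ell|+|k_\tau|$, one obtains, for each fixed $m\geq0$,
\[
\sum_{\ell,p}\big|c_{\ell,p}\big|\,|\mu_\ell|^{m}
\;\leq\; C\sum_{\ell\geq0}|a_\ell|\big(|\mu_\ell|^{m}+|\mu_\ell|^{m-1}+|k_\tau|\,|\mu_\ell|^{m-2}\big).
\]
Thus absolute convergence of the point-mass series applied to every $x^m$ amounts to the finiteness of $\sum_{\ell}a_\ell\mu_\ell^{\,j}$ for the exponents $j=m,\,m-1$ (always) and $j=m-2$ (only when $k_\tau\neq0$). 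As $m$ ranges over $\mathbb{N}_0$ these exponents fill out $j\geq-2$ when $k_\tau\neq0$ and $j\geq-1$ when $k_\tau=0$, the most negative power $\mu_\ell^{-2}$ dropping out precisely because its coefficient $k_\tau$ vanishes. This is exactly the requirement that $\sum_\ell a_\ell\mu_\ell^{\,n-2}$ be finite for $n\in N_\tau$ in (\ref{discretev2}). I expect this convergence bookkeeping to be the main obstacle; once it is in place, the rearrangement used in the filtering step is justified and the moment identification of the previous paragraph completes the proof.
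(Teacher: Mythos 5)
Your proposal is correct and is essentially the paper's own proof in different clothing: the paper decomposes an arbitrary polynomial $f$ as $f_0(x^3)+p_1(x)f_1(x^3)+p_2(x)f_2(x^3)$ (Lemma \ref{baseBpoly}), reduces $\langle\textbf{u},f\rangle$ to $v_0^{-1}u_0\langle\textbf{v},f_0\rangle$ via Lemma \ref{Plemma}, and then carries out exactly your cube-roots-of-unity filter computations (its identities (\ref{vf2AA})--(\ref{vf3}) are precisely your three residue classes of $m$ modulo $3$, including the identity $\eta_2(\mu_\ell)+j\eta_2(j\mu_\ell)+j^2\eta_2(j^2\mu_\ell)=3\mu_\ell^2$), while you equivalently verify the equality on the monomial basis using the moment relations $u_{3n}=\frac{u_0}{v_0}v_n$, $u_{3n+1}=\frac{u_0}{v_0}\tau v_n$, $u_{3n+2}=\frac{u_0}{v_0}k_\tau v_n$ read off from (\ref{Sv}) or (\ref{Mrel1}). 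Your convergence bookkeeping --- tracking exactly which exponents $\mu_\ell^{\,j}$ are needed and why $k_\tau=0$ drops the borderline exponent $j=-2$, which accounts for the case distinction in $N_\tau$ --- is in fact more explicit than the paper's one-line appeal to (\ref{discretev2}) at the end of its proof.
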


\begin{proof}
By Lemma \ref{baseBpoly}, for each polynomial $f(x)$,  there are polynomials $f_0(x)$, $f_1(x),$ and $f_2(x)$ such that the decomposition
\begin{equation}\label{decompf}
f(x)=f_0(x^3)+p_1(x)f_1(x^3)+p_2(x)f_2(x^3)
\end{equation}
holds. Therefore, by Lemma \ref{Plemma} we have
\begin{equation}\label{uvAA}
\langle \textbf{u},f\rangle=v_0^{-1}u_0\langle \textbf{v}, f_0\rangle\;.
\end{equation}
%Taking into account (\ref{discretev1}) and (\ref{discretev2}), we deduce
%\begin{equation}\label{vf1}
%\langle\textbf{v},f_0(x)\rangle=\sum_{\ell=0}^{+\infty}\frac{a_{\ell}}{3\mu_{\ell}^2}\sum_{p=0}^2\langle j^p\eta_2\left(j^p\mu_{\ell}\right)\delta_{j^p\mu_{\ell}},f_0(x^3)\rangle\,.
%\end{equation}
Since $\eta_2(x)=x^2+\tau x+k_\tau$, $j^3=j^6=1$, $j^4=j$, and $1+j+j^2=0$, we compute
$\eta_2\left(\mu_{\ell}\right)+j\eta_2\left(j\mu_{\ell}\right)+j^2\eta_2\left(j^2\mu_{\ell}\right)=3\mu_\ell^2$,
hence
\begin{equation}\label{vf2AA}
\sum_{p=0}^2\big\langle j^p\eta_2\left(j^p\mu_{\ell}\right)\delta_{j^p\mu_{\ell}},f_0(x^3)\big\rangle
=3\mu_\ell^2f_0\left(\mu_{\ell}^3\right)\;.
\end{equation}
Similarly, using $p_1(x)=x-\tau$ and $p_2(x)=x^2-\big(b_0^{(0)}+b_0^{(1)}\big)x+b_0^{(0)}b_0^{(1)}-a_0^{(1)}$,
together with (\ref{ktauAA}), we show that
\begin{equation}\label{vf2}
\sum_{p=0}^2\big\langle j^p\eta_2\left(j^p\mu_{\ell}\right)\delta_{j^p\mu_{\ell}},p_1(x)f_1(x^3)\big\rangle=0
\end{equation}
and
\begin{equation}\label{vf3}
\sum_{p=0}^2\big\langle j^p\eta_2\left(j^p\mu_{\ell}\right)\delta_{j^p\mu_{\ell}},p_2(x)f_2(x^3)\big\rangle=0\;.
\end{equation}
Thus (\ref{discreteu}) follows from (\ref{decompf})--(\ref{vf3}),
taking into account (\ref{discretev1}) and (\ref{discretev2}).
\end{proof}

\begin{remark}
If $k_\tau:=a^{(1)}_0+\tau^2=0,$ then we recover \cite[Lemma 4.3]{TounsiRaddaoui}.
\end{remark}

Lemma \ref{lemadiscret} may be applied to give a discrete measure representation of
the functional $\textbf{u}$ in each case quoted in Tables 3 and 4.
We will present two illustrative examples.
The first one recovers the first discrete measure representation for the functional ${\bf u}$
given in \cite[Theorem 4.4]{TounsiRaddaoui},
which corresponds to the case described in line (1) appearing in Tables \ref{Table3} and \ref{Table4}.
Using the data in line (1) of Table \ref{Table3}, we have $\eta_2(x)=x(x+\tau)$ and $k_\tau=0$.
From the discrete representation of the functional $\mathcal{L}(a,q)$ given in \cite[Appendix 1]{TounsiRaddaoui}, it is clear that conditions (\ref{discretev1}) and (\ref{discretev2}) are fulfilled for $0<q<1$, where $$a_{\ell}:=\big(q^2,q^3\big)_{\infty}\dfrac{q^{2\ell}}{\big(q^3,q^3\big)_{\ell}}\;,\quad
\mu_{\ell}:=q^{\ell}\;,\quad \ell=0,1,2,\ldots\;.$$
Therefore, by (\ref{discreteu}), we deduce
$$\textbf{u}=\dfrac{u_0}{3}\big(q^2,q^3\big)_{\infty}\sum_{\ell=0}^{+\infty}\dfrac{q^{\ell}}{\big(q^3,q^3\big)_{\ell}}
\Big(\big(q^{\ell}+\tau\big)\delta_{q^{\ell}}
+\big(q^{\ell}+j^2\tau\big)\delta_{jq^{\ell}}+\big(q^{\ell}+j\tau\big)\delta_{j^2q^{\ell}}\Big)\,,$$
recovering (up to an affine change of variables) the first solution presented in \cite[Theorem 4.4]{TounsiRaddaoui}.
Discrete measure representation for functionals ${\bf u}$ corresponding to other lines
in Tables \ref{Table3} and \ref{Table4} may be obtained by a similar process. For instance, for the functional ${\bf u}$ of class $s=2$ described in line ($13$), we have $\eta_2(x)=x^2+\tau x-c(c+\tau)$ and $k_\tau=-c(c+\tau)\neq0$.
Thus, for the discrete representation of the functional $\mathcal{U}(a,b,q)$ given in \cite[Appendix 1]{TounsiRaddaoui}, it is clear that conditions (\ref{discretev1}) and (\ref{discretev2}) are fulfilled for $0<q<1$ and $0<a<q^{-1}$, where $$a_{\ell}:=\dfrac{\big(aq^3,q^3\big)_{\infty}}{\big(ac^{-3}q^3,q^3\big)_{\infty}}
\dfrac{\big(c^{-3},q^3\big)_{\ell}}{\big(q^3,q^3\big)_{\ell}}\big(aq^3\big)^{\ell}\;,\quad\mu_{\ell}:=q^{\ell}\;,\quad
\ell=0,1,2,\ldots\;.$$
Hence, using Lemma \ref{lemadiscret}, we deduce
%that in this case a discrete measure representation for $\textbf{u}$ is
$$\begin{array}{l}\textbf{u}=\dfrac{u_0}{3}\dfrac{\big(aq^3,q^3\big)_{\infty}}{\big(ac^{-3}q^3,q^3\big)_{\infty}}\displaystyle\sum_{\ell=0}^{+\infty}
\dfrac{\big(c^{-3},q^3\big)_{\ell}}{\big(q^3,q^3\big)_{\ell}}\big(aq^3\big)^{\ell}\Big(\big(q^{2\ell}+\tau q^{\ell}-c(c+\tau)\big)\delta_{q^{\ell}}\\
\qquad\qquad +\big(q^{2\ell}+j^2\tau q^{\ell}-jc(c+\tau)\big)\delta_{jq^{\ell}}+\big(q^{2\ell}+j\tau q^{\ell}-j^2c(c+\tau)\big)\delta_{j^2q^{\ell}}\Big)\,.
\end{array}$$

\subsection{Further remarks}
We conclude this work with some remarks concerning the analysis presented here.
In the previous section, we started from the knowledge of the monic OPS $\{q_n(x)\}_{n\geq0}$,
and we found the corresponding monic OPS $\{p_n(x)\}_{n\geq0}$ satisfying the cubic
transformation (\ref{Prob-cubic}), requiring $\{p_n(x)\}_{n\geq0}$ to be $H_q-$semiclassical of class at most $2$.
We point out that, conversely, starting from a given
$H_q-$semiclassical monic OPS $\{p_n(x)\}_{n\geq0}$ of class at most $2$-- e.g.,
as described by Tables \ref{Table3} and \ref{Table4} --, we can find the corresponding
monic OPS $\{q_n(x)\}_{n\geq0}$ fulfilling (\ref{Prob-cubic}),
provided we know \emph{a priori} that such a cubic transformation exists.
We will illustrate this procedure considering the monic OPS $\{p_n(x)\}_{n\geq0}$
described in case (13) appearing in Tables \ref{Table3} and \ref{Table4}
(and so, we already know that a cubic transformation exists).
In this case, from Table \ref{Table3}, we have $b_0^{(0)}=\tau$ and $a_0^{(1)}=-\big(c^2+\tau c +\tau^2\big)$.
Since the polynomials $\Phi$ and $\Psi$ satisfy the $q-$difference equation
$H_q(\Phi{\bf u})=\Psi{\bf u}$, then $\langle{\bf u},\Psi\rangle=0$.
On the other hand, we also have $p_1(x)=x-\tau$,
$p_2(x)=\Delta_0(1,1;x)=x^2-(b_0^{(1)}+\tau)x+\tau b_0^{(1)}+c^2+\tau c+\tau^2$,
and $p_3(x)=\pi_3(x)-r_0=x^3-r_0$.
Therefore, from the system of four equations $\langle{\bf u},\Psi\rangle=\langle{\bf u},p_j\rangle=0$ ($j=1,2,3$),
we deduce, after some computations, that
$$r_0=c^3\big(c^3-a q^3\big)^{-1}\big(1-a q^3\big)\,.$$
Furthermore, using (\ref{eta2}), (\ref{pi3new}), and (\ref{polinomios}), we find
\begin{equation}\label{systR}
\begin{array}{c}
b_0^{(1)}=-\tau-b_0^{(2)}\;,\quad
b_0^{(2)}=\tau+ \dfrac{\tau^3-r_0}{a_0^{(1)}}\;, \quad a_0^{(2)}=b_0^{(1)}b_0^{(2)}-a_0^{(1)}-\tau^2\;.
\end{array}
\end{equation}
Replacing in (\ref{systR}) the above expression for $r_0$ and taking into account the constraints
appearing in case (13) of Table 3, we obtain
$$
\begin{array}{c}
b_0^{(1)}=c+\dfrac{a q^3\big(c^3-1\big)}{\big(c^3-a q^3\big)\big(c^2+\tau c+\tau^2\big)}\;,\quad
b_0^{(2)}=c+\dfrac{c^3\big(1-c^3\big)}{\big(c^3-a q^3\big)\big(c^2+\tau c+\tau^2\big)}\;,\\ [0.5em]
a_0^{(2)}=\dfrac{c^3q^3 a\big(1-c^3\big)}{\big(c^3-a q^3\big)^2\big(c^2+\tau c+\tau^2\big)^2}\;.
\end{array}
$$
On the other hand, using again the data in case (13) of Table 4, we may write
$$\begin{array}{c}
x^2\Phi(x)=x^6+A_1x^5+A_2x^4+A_3x^3\;,\quad
\Psi(x)=B_1x^3+B_2x^2+B_3x+B_4\,,
\end{array}$$
where the coefficients $A_i$ and $B_i$ are
$$
\begin{array}{c} A_1:=c\big(1-q^{-1}\big)\,,\quad A_2:=c^2\big(1-q^{-1}\big)\,, \quad A_3:=-c^3q^{-1}\;,\\ [0.5em]
B_1:=a^{-1}q^{-3}(q-1)^{-1}\big(a q^3-c^3\big)\,,\quad B_2:=a^{-1}q^{-3}c\big(a q^2-\tau c^2-c^3\big)\,,\\ [0.5em]
B_3:=a^{-1}q^{-3}c^2\big(a q^2+\tau^2 c+2\tau c^2+c^3\big)\,,\quad B_4:=a^{-1}q^{-2}(q-1)^{-1}c^3\big(a q-1\big)\,.
\end{array}
$$
The above computations yield
$$
\begin{array}{rcl}
x^2\Phi(x) &=& p_0(x)\big(x^6+\big(A_1\big(\tau^2+a_0^{(1)}\big)+A_2\tau+A_3\big)x^3\big) \\ [0.25em]
&& \quad+p_1(x)\big(A_2+\big(\tau+b_0^{(1)}\big)A_1\big)x^3+p_2(x)A_1 x^3,\\ [0.5em]
\Psi(x) &=& p_0(x)\big(B_1 x^3+B_2\big(\tau^2+a_0^{(1)}\big)+B_3\tau+B_4\big) \\ [0.25em]
&& \quad+p_1(x)\big(B_3+\big(\tau+b_0^{(1)}\big)B_2\big)+p_2(x)B_2\,.
\end{array}
$$
Therefore, using (\ref{Soma1}), (\ref{Soma2}) and (\ref{EqD2}) with $k=3$ and $m=p=0$
(we notice that while proving (\ref{Soma1}) and (\ref{Soma2}) we considered $p\geq1$,
but by direct inspection we see that the formulas also hold for $p=0$),
we concluded that
$$
\begin{array}{rcl}
f_0\big(x^3\big)&=&x^6+\big(A_1\big(\tau^2+a_0^{(1)}\big)+A_2\tau+A_3\big)x^3=x^3(x^3-c^3)\;,\\ [0.5em]
g_0\big(x^3\big)&=&[3]_q^{-1}\big(B_1x^3+B_2\big(\tau^2+a_0^{(1)}\big)+B_3\tau+B_4\big) \\ [0.25em]
&=&q^{-3}a^{-1}\big(q^3-1\big)^{-1}\big(\big(aq^3-c^3\big)x^3+c^3\big(1-aq^3\big)\big)\,.
\end{array}
$$
Hence the functional ${\bf v}$ fulfils
$$H_{q^3}\big(x\big(x-c^3\big)\,{\bf v}\big)=\Big(q^{-3}a^{-1}\big(q^3-1\big)^{-1}\big(\big(aq^3-c^3\big)x
+c^3\big(1-aq^3\big)\big)\Big){\bf v}\;.$$
As a consequence, by Table \ref{Table1}, $q_n(x)=U_n\big(x; a,c^{-3}q^{-3}|q^3\big)$.
This confirms the result given on line (13) in Table \ref{Table3}.

\section*{Acknowledgements }
This work is partially supported by the Centre for Mathematics of the University of Coimbra -- UID/MAT/00324/2013, funded by FCT/MCTES and co-funded by the European Regional Development Fund through the Partnership Agreement PT2020, and by Project UID/Multi/04016/2016, funded by FCT/MCTES.
KC is supported by the Portuguese Government through the Funda\c{c}\~ao para a Ci\^encia e a Tecnologia (FCT) under the grant SFRH/BPD/101139/2014.
MNJ would like to thank the Instituto Polit\'ecnico de Viseu and CI\&DETS for their support. FM is partially supported by the Direcci\'on General de Investigaci\'on Cient\'ifica y T\'ecnica, Ministerio de Econom\'ia, Industria y Competitividad of Spain under the grant MTM2015--65888--C4--2--P. JP is partially supported by the Direcci\'on General de Investigaci\'on Cient\'ifica y T\'ecnica, Ministerio de Econom\'ia, Industria y Competitividad of Spain under the grant MTM2015--65888--C4--4--P.


\begin{thebibliography}{99}

\bibitem{Renato-libro} {R. \'Alvarez-Nodarse},
{\sl Polinomios hipergeom\'etricos y $q-$polin\'omios},
Monografias del Seminario Matem\'atico Garc\'\i a de Galdeano {\bf 26}, 2003.
(Revised edition: 2014.) In Spanish.

\bibitem{BarrucandDickinson} {P. Barrucand \and D. Dickinson},
{\it On cubic transformations of orthogonal polynomials},
Proc. Amer. Math. Soc. {\bf 17} (1966), 810-814.

\bibitem{KMP} {K. Castillo, M. N. de Jesus, \and J. Petronilho},
{\it  On semiclassical orthogonal polynomials via polynomial
mappings}, J. Math. Anal. Appl. {\bf 455} (2017), 1801-1821.

\bibitem{CharrisIsmail-siev2} {J. Charris \and M. E. H. Ismail},
{\it On sieved orthogonal polynomials II: random walk polynomials},
Canad. J. Math. {\bf 38} (1986), 397-414.

\bibitem{CharrisIsmail} {J. Charris \and M. E. H. Ismail},
{\it On sieved orthogonal polynomials VII: generalized polynomial
mappings}, Trans. Amer. Math. Soc. {\bf 340} (1993), 71-93.

\bibitem{CharrisIsmailMonsalve} {J. Charris, M. E. H. Ismail, \and S. Monsalve},
{\it On sieved orthogonal polynomials X: general blocks of recurrence relations},
Pacific J. Math. {\bf 163} (2) (1994), 237-267.

\bibitem{ChiharaBUMI} {T. S. Chihara},
{\it On kernel polynomials and related systems},
Boll. Un. Mat. Ital., Serie 3,
\textbf{19} (4) (1964), 451-459.

\bibitem{Chihara} {T. S. Chihara},
{\it An Introduction to  Orthogonal Polynomials} Gordon and Breach, New York, 1978.

\bibitem{LChihara} {L. M. Chihara \and T. S. Chihara}, A class of nonsymmetric orthogonal polynomials. J. Math. Anal. Appl.  \textbf{126}  (1987),  no. 1, 275–-291.

\bibitem{Hahn} {W. Hahn},
{\it  \"{U}ber orthogonal polynome, die $q-$differenzengleichungen
gen\"{u}gen}, Math. Nachr. {\bf 2} (1949), 4-34.

\bibitem{MarcioPetronilhoJAT} {M. N. de Jesus \and J. Petronilho},
{\it On orthogonal polynomials obtained via polynomial mappings},
J. Approx. Theory {\bf 162} (2010), 2243-2277.

\bibitem{JerVan} {J. Geronimo \and W. Van Assche},
{\it Orthogonal polynomials on several intervals via a polynomial mapping},
Trans. Amer. Math. Soc. {\bf 308} (1986), 559-581.

\bibitem{KM} {L. Kh\'eriji \and P. Maroni},
{\it  The $H_q-$classical orthogonal polynomials}, Acta Appl. Math. {\bf 71} (2002), 49-115.

\bibitem{Lotfi} {L. Kh\'eriji}:
{\it  An introduction to the $H_q-$semiclassical orthogonal
polynomials}, Methods Appl. Anal. {\bf 10} (3)
(2003), 387-411.

\bibitem{PacoZeLAA} {F. Marcell\'an \and J. Petronilho},
{\it Eigenproblems for tridiagonal $2-$Toeplitz matrices and quadratic polynomial mappings},
Linear Algebra Appl. {\bf 260} (1997), 169-208.

\bibitem{PacoZePortMath} {F. Marcell\'an \and J. Petronilho},
{\it Orthogonal polynomials and quadratic transformations},
Port. Math. {\bf 56}(1) (1999), 81-113.

\bibitem{PacoZeCubic1} {F. Marcell\'an \and J. Petronilho},
{\it Orthogonal polynomials and cubic polynomial mappings I},
Commun. Anal. Theory Contin. Fract. {\bf 8} (2000), 88-116.

\bibitem{PacoZeCubic2} {F. Marcell\'an \and J. Petronilho},
{\it Orthogonal polynomials and cubic polynomial mappings II:
the positive-definite case}, Commun. Anal. Theory Contin. Fract. {\bf 9} (2001), 11-20.

\bibitem{PacoGabriela} {F. Marcell\'an \and G. Sansigre},
{\it Orthogonal polynomials and cubic transformations},
J. Comput. Appl. Math. {\bf 49} (1993), 161-168.

\bibitem{Maroni} {P. Maroni},
{\it Une th\'eorie alg\'ebrique des polyn\^omes orthogonaux.
Applications aux polyn\^omes orthogonaux semiclassiques},
In C. Brezinski et al. Eds., Orthogonal Polynomials and Their Applications,
Proc. Erice 1990, IMACS, Ann. Comp. Appl. Math. {\bf 9} (1991), 95-130.

\bibitem{Maroni1} {P. Maroni}, {\it Sur la d\'ecomposition quadratique d'une suite de polyn\^{o}mes orthogonaux. I}.
Riv. Mat. Pura Appl.  \textbf{6} (1990), 19--53.

\bibitem{Maroni2} {P. Maroni}, {\it Sur la d\'ecomposition quadratique d'une suite de polyn\^{o}mes orthogonaux}.
II.  Portugal. Math. \textbf{50} (1993), no. 3, 305--329.

\bibitem{Mesquita}  {P. Maroni, T. A. Mesquita, \and Z. da Rocha}, {\it On the general cubic decomposition of polynomial sequences}. J. Difference Equ. Appl.  \textbf{17}  (2011),  no. 9, 1303–-1332.

\bibitem{MedemRenatoPaco} {J. C. Medem, R. \'Alvarez-Nodarse, \and F. Marcell\'an},
{\it On the $q-$polynomials: a distributional study},
J. Comput. Appl. Math. {\bf 135} (2001), 157-196.

\bibitem{Peherstorfer} {F. Peherstorfer},
{\it Inverse images of polynomial mappings and polynomials orthogonal on them},
J. Comput. Appl. Math. {\bf 153} (2003), 371–385.

\bibitem{Pettifor} {D. G. Pettifor \and  D. L. Weaire (editors),} {\it The recursion Method and its Applications} Springer Series uin Solid State Sciences \textbf{58}, Springer-Verlag, Berlin, 1985.

\bibitem{TounsiBouguerra} {M. I. Tounsi \and I. Bouguerra},
{\it Cubic decomposition of a family of semiclassical polynomial
sequences of class one}, Integral Transforms Spec. Funct. {\bf 26}
(2015) no. 5, 377-394.

\bibitem{TounsiRaddaoui} {M. I. Tounsi \and Z. Raddaoui},
{\it  On the cubic decomposition of a family of
$H_q-$semiclassical polynomial sequences of class one},
J. Differ. Equ. Appl. {\bf 22} (3) (2015), 391-410.

\bibitem{Wheeler} J. C. Wheeler, {\it Modified moments abd continued fraction coefficients for the diatomic linear chain}, J. Chem. Phys. \textbf{80} (1984), 472--476.

\end{thebibliography}
\end{document}